\documentclass{article}
\usepackage{amssymb}
\usepackage{amsthm}
\usepackage{amsmath}
\usepackage{graphicx}
\usepackage{caption}
\usepackage{afterpage}
\usepackage{float}
\usepackage{lipsum}
\usepackage{subcaption}
\usepackage{amssymb}
\usepackage{mathrsfs}
\usepackage{hyperref}
\usepackage{geometry}
\usepackage{setspace}
\usepackage{cite}
\usepackage[utf8]{inputenc}
\usepackage[misc]{ifsym}
\usepackage{xcolor}
\usepackage{amsthm}
\usepackage{etoolbox}
\usepackage{indentfirst}
\usepackage{ifthen}
\usepackage[english]{babel}
\usepackage{csquotes}
\usepackage{lipsum}
\usepackage{mathtools}
\usepackage[export]{adjustbox}
\AtBeginEnvironment{proof}{%
}

\makeatletter
\renewcommand{\@makecaption}[2]{
	\vskip\abovecaptionskip
	\centering 
	\small
	\ifthenelse{\equal{#2}{}}
	{\textbf{\figurename~\thefigure.}}
	{\textbf{\figurename~\thefigure.}~#2}
	\vskip\belowcaptionskip
}

\makeatother

\newtheorem{theorem}{Theorem}[section] 
\newtheorem{definition}[theorem]{Definition} 
\newtheorem{lemma}[theorem]{Lemma} 
\newtheorem{corollary}[theorem]{Corollary} 
\newtheorem{proposition}[theorem]{Proposition} 

\title{The planar Tur\'an number of $\{K_{4},\Theta_{6}^{i}\}$}
\author{
	Jing Chen$^{1}$,
	Kai Gao$^{1}$,
	Yongxin Lan$^{2,}$\footnote{Corresponding author. E-mail: yxlan@hebut.edu.cn}, Changqing Xu$^{2}$,
	Shaoyu Zhao$^{2}$\\[2ex]
	\normalsize
	\begin{tabular}{@{}c@{}}
		1 School of Mathematics and Statistics, Shandong Normal University, Jinan, Shandong, China\\
		2 School of Science, Hebei University of Technology, Tianjin, China
	\end{tabular}
}
\date{}
\usepackage[numbers,sort&compress]{natbib}
\hypersetup{colorlinks=true,linkcolor=black,citecolor=black,urlcolor=black}
\allowdisplaybreaks

\begin{document}

	\maketitle
	\vspace{-2.5 cm}
	\vskip20pt \baselineskip12pt \vskip40pt\baselineskip16pt

	\noindent {\bf Abstract.}
	Let $\mathcal{H}$ be a family of graphs.
A graph is said to be $\mathcal{H}$-free if it contains no subgraph isomorphic to a graph in   $\mathcal{H}$.
The planar Tur\'an number $ex_{_\mathcal{P}}(n,\mathcal{H})$ is defined as the maximum number of edges in an $\mathcal{H}$-free planar graph on $n$ vertices.
In this paper, we determine the exact value of $ex_{_\mathcal{P}}(n,\{K_{4}, \Theta_{6}^{1}\})$ and a tight  upper bound of  $ex_{_\mathcal{P}}(n,\{K_{4}, \Theta_{6}^{2}\})$.
	
	 \vskip9pt\noindent  {\bf Keywords.}
	  Plane graph; Planar Tur\'{a}n number; Theta graph.
	  \section{Introduction}

	   In this paper, all graphs considered are simple, finite and undirected. For a graph \( G \), we denote by \( V(G) \) its vertex set, by \( E(G) \) its edge set, by \( v(G) \) the number of vertices and by \( e(G) \) the number of edges.
	    For a vertex \( v \in V(G) \), let \( d_G(v) \) denote its degree in \( G \) and let \( \delta(G) \) denote the minimum degree of \( G \). Let  \( C_n \) denote a cycle of length $n$.
	 For two  vertex‑disjoint graphs $H_1$ and $H_2$, we denote by $H_1+H_2$ the graph with vertex set $V(H_1)\cup V(H_2)$ and edge set $E(H_1)\cup E(H_2)\cup \{uv : u\in V(H_1),\, v\in V(H_2)\}$, and by $H_1\cup H_2$ the graph with  vertex set $V(H_1)\cup V(H_2)$ and edge set $E(H_1)\cup E(H_2)$.
	    A graph is \textit{planar} if it can be drawn in the plane so that its edges intersect only at their  endpoints.
	    A planar embedding of a planar graph is a \textit{plane graph}.
	    For a plane graph $G$, we use $\mathcal{F}(G)$ to denote the set of all faces of $G$.
	   The unbounded face of a plane graph is called an \textit{outer face}.
	   Every edge on the boundary of an outer face of $G$ is called an \textit{exterior edge} of $G$.
	   We use $\partial(F)$ to denote the set of edges on the boundary of a face $F$.
	    The \textit{degree} of a face $F$ is the number of edges on the boundary of $F$, where a cut edge is counted twice.
	    A plane graph is a \textit{plane triangulation}   if every face is of degree 3.
	    A \textit{decomposition} of a graph $G$ is a set of edge-disjoint subgraphs whose edge sets partition $E(G)$.

	   For a family of graphs $\mathcal{H}$, a graph is \textit{$\mathcal{H}$-free} if it does not contain any graph in $\mathcal{H}$ as a subgraph. When $\mathcal{H}=\{H\}$, we simply write $H$-free.
	    In 1941, Tur\'an \cite{Kr} determined the exact value of the Tur\'an number of $K_{\ell}$ (where $\ell$ is a positive integer with $\ell \geq 3$), that is, the maximum number of edges in a $K_{\ell}$-free graph on $n$ vertices, and its extremal graph is the balanced complete ($\ell-1$)-partite graph on $n$ vertices. This classical result in extremal graph theory has inspired  substantial   amount of related work, including the Erd\H{o}s-Stone theorem \cite{ES}, generalized Tur\'an problems \cite{Alon} and Tur\'an problems for hypergraphs \cite{hyper}.
	  In 2016, Dowden \cite{C45} considered Tur\'an-type~problems when the host graphs are planar graphs-specifically, how many edges can an $\mathcal{H}$-free planar graph on $n$ vertices have?
	  The  \textit{planar Tur\'an number} $ex_{_\mathcal{P}}(n,\mathcal{H})$ is the
	  maximum number of edges in an  $\mathcal{H}$-free  plane graph on $n$ vertices.
	   For cycle-free plane graphs, Dowden~\cite{C45} established  tight upper bounds of $ex_{_\mathcal{P}}(n,C_{4})$ and $ex_{_\mathcal{P}}(n,C_{5})$, while subsequent work \cite{C6,C7} determined  tight upper bounds of  $ex_{_\mathcal{P}}(n,C_{6})$ and $ex_{_\mathcal{P}}(n,C_{7})$.
	
	   \begin{figure}[H]
	   	\centering
	   	\includegraphics[width=0.4\linewidth,
	   	height=0.4\textheight,
	   	keepaspectratio,
	   	draft=false]{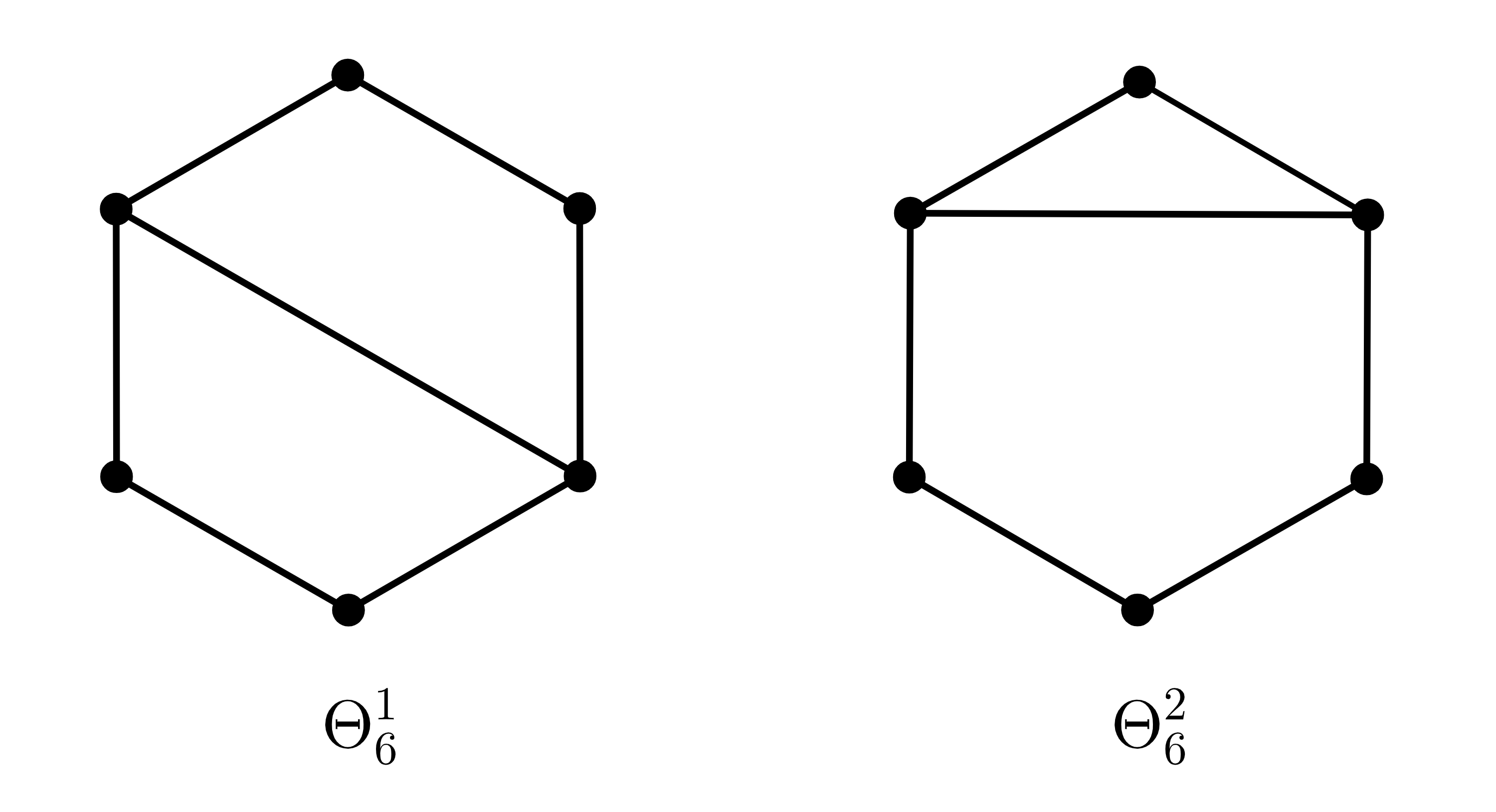}
	   	\caption {Two types of $\Theta_{6}$.}	
	   	\label{th6}
	   \end{figure}

	    \begin{theorem}\leavevmode
	   	\rm	\label{cycle}
	   	Let $n$ be an integer.

	   	\noindent	$(1)$ \cite{C45} For all $n\geq 3$,  $ex_{_\mathcal{P}}(n,C_{3})=2n-4$.
	   	
	   	\noindent	$(2)$ \cite{C45}  For all $n\geq 4$, $ex_{_\mathcal{P}}(n,C_{4})\leq \frac{15}{7}(n-2)$, with equality when $n\equiv30\ (\mathrm{mod}\ 70)$.
	
	   	\noindent	$(3)$ \cite{C45} For all $n\geq 11$, $ex_{_\mathcal{P}}(n,C_{5})\leq \frac{12}{5}n-\frac{33}{5}$,  with equality when $n\equiv9\ (\mathrm{mod}\ 15)$.

	   	\noindent	$(4)$ \cite{C6} For all $n\geq 18$, $ex_{_\mathcal{P}}(n,C_{6})\leq \frac{5}{2}n-7$, with equality when $n\equiv2\ (\mathrm{mod}\ 5)$.
	   	
	   	\noindent	$(5)$ \cite{C7}  For all $n\geq 39$, $ex_{_\mathcal{P}}(n,C_{7})\leq \frac{18}{7}n-\frac{48}{7}$ and the equality holds for infinitely many integers $n$.
	   \end{theorem}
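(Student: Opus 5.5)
Theorem~\ref{cycle} collects results from \cite{C45,C6,C7}, so within this paper we would simply cite them. If we had to reprove them, the plan would follow one template for all five parts: an upper bound from Euler's formula refined by a face-weighting (discharging) argument, and a matching construction for the stated residues of $n$.

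\textbf{Part (1).} This is the base case and is short. Take a $C_3$-free plane graph $G$ on $n\ge 3$ vertices. We may assume $G$ is connected, since adding edges between components creates no triangle and only increases $e(G)$. If $G$ is a tree, then $e(G)=n-1\le 2n-4$ for $n\ge 3$. Otherwise every face has degree at least $4$, so $2e(G)=\sum_{F\in\mathcal F(G)}\deg F\ge 4|\mathcal F(G)|$. Combining this with $n-e(G)+|\mathcal F(G)|=2$ gives $e(G)\le 2n-4$. Equality is attained by $K_{2,n-2}$, which is planar and bipartite, hence triangle-free.

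\textbf{Parts (2)--(5).} For $C_k$ with $4\le k\le 7$ the crude face count no longer suffices, because triangles and other short faces are allowed. The plan is as follows.
\begin{enumerate}
\item Reduce to a $2$-connected plane graph, handling blocks and cut vertices by induction on $n$.
\item Partition the faces into \emph{clusters}: maximal unions of triangular (or, more generally, short) faces glued along edges.
\item Show that $C_k$-freeness forces each cluster to be one of a small explicit list of small triangulated pieces. For instance, under $C_4$-freeness two triangles cannot share an edge, and under $C_5$-freeness clusters have boundary length at most $4$.
\item Assign each edge weight $1$, split it between its two incident faces or clusters, and prove a ratio inequality $e(B)\le \alpha\cdot(\text{face count of } B)$ for every cluster $B$. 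Here $\alpha$ is chosen so that Euler's formula returns the claimed bound $\frac{15}{7}(n-2)$, $\frac{12}{5}n-\frac{33}{5}$, $\frac52n-7$, or $\frac{18}{7}n-\frac{48}{7}$.
\item Summing over clusters and substituting into Euler's formula gives the upper bound. The additive constants come from treating the outer face separately, which also explains the lower thresholds on $n$.
\end{enumerate}

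\textbf{Tightness.} For each part we take a fixed extremal gadget in which every cluster meets its ratio with equality, and glue copies along a periodic structure. The number of vertices then grows in steps equal to the period ($70$, $15$, $5$, \dots), which accounts for the congruence conditions on $n$.

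\textbf{Main obstacle.} The hard step is the cluster classification and ratio inequality in step~4, above all for $C_6$ and $C_7$. There a cluster may contain long non-triangular faces adjacent to many triangles, and a purely local charging rule fails. I would first try discharging from each long face to its neighbouring triangular blocks, and then verify the finitely many cluster types by case analysis. This is precisely where the cited papers spend most of their effort.
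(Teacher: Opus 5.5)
The paper gives no proof of Theorem~\ref{cycle}. It is a digest of results quoted from the literature, so citing them is exactly the paper's treatment. Your self-contained proof of part (1) is correct. The one step you leave implicit is that in a connected plane graph which is not a tree, every face boundary contains a cycle; triangle-freeness then forces every face to have degree at least $4$.

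Your fallback sketch for (2)--(5), however, would fail as written. In step 4 the inequality points the wrong way: summing $e(B)\le \alpha f(B)$ over clusters and using $f(G)=e(G)-n+2$ gives $(\alpha-1)e(G)\ge \alpha(n-2)$, which is a lower bound on $e(G)$. What must be shown block by block is an upper bound on the face contribution, $\alpha f(B)\le (\alpha-1)e(B)$. This is precisely the form $5f(B)-3e(B)\le 0$ and $12f(B)-7e(B)\le 0$ used in Lemmas~\ref{K4theta61} and~\ref{L:K4theta62}, where $f(B)$ sums the shares $1/|\partial(F)|$ that each face hands to its boundary edges.

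Moreover, even the corrected local inequality only gives $e(G)\le \alpha(n-2)$. For (3)--(5) this is $\frac{12}{5}n-\frac{24}{5}$, $\frac{5}{2}n-5$ and $\frac{18}{7}n-\frac{36}{7}$, strictly weaker than the stated bounds. Reaching the stated constants requires a global surplus:
$\sum_{B}(12f(B)-7e(B))\le -9$ for (3), $\sum_{B}(5f(B)-3e(B))\le -4$ for (4), and $\sum_{B}(18f(B)-11e(B))\le -12$ for (5). Saying this comes from \emph{treating the outer face separately} is an assertion, not an argument.

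So for (2)--(5) your proposal rests entirely on the citations. That is legitimate here, since the paper does the same. But the sketch is not a viable proof route until the key inequality is reversed and the source of the additive constants is actually identified.
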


	   The upper bound of $ex_{_\mathcal{P}}(n,\mathcal{H})$ has been studied in \cite{C345, C56, C67} when $\mathcal{H}$ contains two cycles.

	   \begin{theorem}\leavevmode\label{cij}
	  	\rm
	  	Let $n$ be an integer.
	  	
	  	\noindent	$(1)$ \cite{C345}  For all $n\geq 4$, $ex_{_\mathcal{P}}(n,\{C_{3},C_{4}\})\leq \frac{5}{3}(n-2)$, with equality when $n\equiv5\ (\mathrm{mod}\ 15)$.
	  	
	  	\noindent
	  	$(2)$ \cite{C345}  For all $n\geq 4$, $ex_{_\mathcal{P}}(n,\{C_{3},C_{5}\})=2n-4$.
	  	
	  		\noindent
	  	$(3)$ \cite{C345} For all $n\geq 8$, $ex_{_\mathcal{P}}(n,\{C_{4},C_{5}\})\leq 2n-6$, with equality when $n\equiv3\ (\mathrm{mod}\ 9)$.

	  	\noindent	$(4)$ \cite{C56} For all $n\geq 14$, $ex_{_\mathcal{P}}(n, \{C_{5},C_{6}\})\leq \frac{30}{13}n-\frac{84}{13}$, with equality  when $n\equiv7\ (\mathrm{mod}\ 10)$.
	  	
	  	\noindent	$(5)$ \cite{C67}  For all $n\geq 76$, $ex_{_\mathcal{P}}(n,\{C_{6},C_{7}\})\leq \frac{27}{11}n-\frac{72}{11}$, with equality when $n\equiv10\ (\mathrm{mod}\ 22)$.
	  	
	  \end{theorem}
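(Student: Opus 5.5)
Theorem~\ref{cij} collects known results, so the plan is to recover each part with the standard method: Euler's formula plus counting (or discharging) over face degrees in a plane embedding. Parts (4) and (5) I would simply take from \cite{C56,C67}, since their proofs need long structural case analyses. For parts (1)--(3), take an edge-maximal $\mathcal{H}$-free plane graph $G$ with $n$ vertices, $e$ edges and $f$ faces.

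We may assume $G$ is connected: adding an edge between two components in a common face creates no cycle. Forests have $e\le n-1$, which is below every claimed bound for $n\ge 4$. So we may also assume $G$ contains a cycle, and then every face boundary contains a cycle. Throughout, $n-e+f=2$ and $\sum_{F\in\mathcal{F}(G)}\deg(F)=2e$.

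For (1), every face boundary contains a cycle of length at least $5$, so every face has degree at least $5$. Hence $2e\ge 5f$, and Euler's formula gives $e\le \frac53(n-2)$. For equality we need a plane graph all of whose faces are pentagons. The dodecahedron works for $n=20$. For the general residue class $n\equiv 5 \pmod{15}$, glue dodecahedral pieces along a common pentagon or path so that every face stays a $5$-face; this is the construction I would try first.

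For (2), the upper bound is Theorem~\ref{cycle}(1). For the lower bound, $K_{2,n-2}$ is planar and bipartite, so it has no odd cycles and is therefore $\{C_3,C_5\}$-free, and it has exactly $2n-4$ edges.

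For (3), $C_4$-freeness implies that two triangular faces never share an edge. Freeness of both $C_4$ and $C_5$ then forces every non-triangular face to have degree at least $6$.
\begin{itemize}
\item A direct count ($3f_3\le e$ and $2e-3f_3\ge 6(f-f_3)$) only gives $f\le e/2$, hence $e\le 2n-4$.
\item So a discharging step is needed. Give each face $F$ charge $\deg(F)-4$ and each vertex charge $d(v)-4$; by Euler's formula the total charge is $-8$.
\item Each triangle has charge $-1$ and pulls $\tfrac13$ across each of its edges from the neighbouring large face.
\item The key claim is that a face of degree $k\ge 6$ is adjacent to at most $\lfloor k/2\rfloor$ triangles. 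Otherwise two triangles would sit on consecutive edges at a common vertex, and that configuration closes a $4$- or $5$-cycle.
\item With this claim every large face ends with charge at least $\tfrac{2}{3}\cdot\frac{k}{6}\cdot\ldots$ in excess, and this surplus should improve the inequality to $e\le 2n-6$.
\end{itemize}

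I expect the main obstacle to be that last claim and the exact accounting for low-degree vertices, since the bound $2n-6$ is $2$ below the naive estimate. If the claim fails, my fallback is to strengthen it by showing each $6$-face is adjacent to at most $2$ triangles. The extremal graphs for $n\equiv 3\pmod 9$ would come from gluing $9$-vertex blocks of triangles and hexagons so that every bound above is tight.
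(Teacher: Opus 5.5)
Parts (1) and (2) are fine. The girth-$5$ face count gives $e\le\frac53(n-2)$, and $C_5$ together with dodecahedra glued along pentagonal faces attains it. Part (2) is exactly the remark the paper makes after the theorem: the $C_3$-free bound plus $K_{2,n-2}$. For (4) and (5) you do what the paper does, since the paper proves none of this theorem and only quotes the literature.

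The genuine gap is the upper bound in (3). Your key claim is false. Two triangles on consecutive edges $uv,vw$ of a face form a bowtie at $v$, and a bowtie contains no $4$- or $5$-cycle. Concretely, take a hexagon $a_1a_2\cdots a_6$ and add vertices $x_1,\dots,x_6$, with $x_i$ adjacent to $a_i$ and $a_{i+1}$ (indices mod $6$), drawn outside. Each $x_i$ has degree $2$, so every cycle is either a triangle $a_ix_ia_{i+1}$ or goes once around the hexagon. All cycle lengths therefore lie in $\{3,6,7,\dots,12\}$. Yet the inner $6$-face borders six triangles, which refutes both your claim and your fallback. This graph has $n=12$ and $e=18=2n-6$, so it is even extremal. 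The same picture appears around every interior hexagon of a finite piece of the kagome (trihexagonal) lattice, which is also $\{C_4,C_5\}$-free and where all vertices involved have degree $4$.

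The difficulty is not merely local. By Euler's formula, $e\le 2n-6$ is equivalent to $\sum_{F}(\deg F-4)\ge 4$. A face-by-face accounting in which triangles are paid by their neighbouring faces yields in general only $\ge 0$, which is the bound $e\le 2n-4$ you already have, because zero-surplus $6$-faces really occur. The extra $4$ has to be located globally; in the example it sits entirely in the outer $12$-face. Also, your argument never uses $n\ge 8$, yet the bound fails for $n=7$. Three triangles sharing one vertex form a $\{C_4,C_5\}$-free plane graph with $9>2\cdot 7-6$ edges, and its outer $9$-face borders nine triangle edges.

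So what is missing is a global ingredient that:
\begin{itemize}
\item trades the face surplus against vertices of degree $2$ and $3$, to which you assign negative charge $d(v)-4$ but never discharge it;
\item genuinely uses $n\ge 8$.
\end{itemize}

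Two smaller points. First, faces of degree $5$ do occur (a triangular face containing a pendant edge), so ``every non-triangular face has degree at least $6$'' requires first deleting low-degree vertices, and that deletion needs care near $n=8$. Second, no extremal family for $n\equiv 3\pmod 9$ is actually constructed; the $12$-vertex graph above is one member.
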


	 Note that if a graph is $\{C_{3}, C_{k}\}$-free, then it is also $C_{3}$-free.
	 Thus
	 $ex_{_\mathcal{P}}(n,\{C_{3}, C_{k}\}) \leq ex_{_\mathcal{P}}(n,C_{3}) =2n-4$  for  $n \geq 4$.
	 The planar graph $K_{2,n-2}$ is  $\{C_{3}, C_{k}\}$-free for $k\geq 5$ and $n\geq4$.
	 Hence
	 $ ex_{_\mathcal{P}}(n,\{C_{3}, C_{k}\}) \geq 2n-4 $ for $n\geq4$ and  $k\geq 5$.
	 Then the following result extends the result of Theorem \ref{cij}(2).

	   \begin{theorem}\sl
	   	Let $n\geq 4$ and $k\geq 5$ be integers.
	   	Then
	    $ex_{_\mathcal{P}}(n,\{C_{3},C_{k}\})=2n-4$, with extremal graph $K_{2,n-2}$.
	   \end{theorem}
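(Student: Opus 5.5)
The statement has two parts: an upper bound $2n-4$ and a matching construction. The paper has essentially supplied both just before the theorem, so the plan is to make each half explicit.

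\emph{Upper bound.} Every $\{C_3,C_k\}$-free planar graph is in particular $C_3$-free. Theorem~\ref{cycle}(1) then gives $e(G)\le ex_{_\mathcal{P}}(n,C_3)=2n-4$ for all $n\ge 3$, hence for all $n\ge 4$. If one prefers a self-contained argument, the standard Euler-formula count does the job. Reduce to a connected graph with at least one cycle; otherwise $e(G)\le n-1\le 2n-4$ for $n\ge 3$. In a triangle-free plane graph every face has degree at least $4$. Summing face degrees gives $4|\mathcal{F}(G)|\le 2e(G)$, and combining this with $n-e(G)+|\mathcal{F}(G)|=2$ yields $e(G)\le 2n-4$.

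\emph{Lower bound.} I would verify that $K_{2,n-2}$ is planar and $\{C_3,C_k\}$-free with $2(n-2)=2n-4$ edges.
\begin{itemize}
\item Planarity: place the two vertices of the small part at the poles and the $n-2$ others on the equator, each joined to both poles.
\item Triangle-freeness: the graph is bipartite, so it has no odd cycles and in particular no $C_3$.
\item $C_k$-freeness for $k\ge 5$: every cycle alternates between the parts. A cycle of length $2m$ uses $m$ vertices of the part of size $2$, so $m\le 2$ and every cycle has length exactly $4$. Hence $K_{2,n-2}$ contains no $C_k$ for $k\ge 5$, regardless of parity.
\end{itemize}
The condition $n\ge 4$ ensures $n-2\ge 2$, so the graph is nondegenerate and the edge count $2n-4$ is attained.

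Combining the two halves gives $ex_{_\mathcal{P}}(n,\{C_3,C_k\})=2n-4$ with $K_{2,n-2}$ as an extremal graph. There is no real obstacle here. The only point needing care is the observation that cycles in $K_{2,n-2}$ have length exactly $4$, which rules out every $k\ge 5$, including even values of $k$ where bipartiteness alone would not suffice.
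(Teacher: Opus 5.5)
Your proposal is correct and follows the paper's own argument: the upper bound comes from $C_3$-freeness via Theorem~\ref{cycle}(1), and the matching lower bound from $K_{2,n-2}$. Your explicit check that every cycle of $K_{2,n-2}$ has length exactly $4$ fills in a fact the paper only asserts, and the Euler-formula derivation is an optional self-contained alternative to citing the known bound.
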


	   For $k\geq 4$, let
	 $\Theta_{k}$ be the family of \textit{Theta graphs} obtained from a cycle $C_{k}$  by adding an edge between two  non-adjacent vertices.
	Since $\Theta_k$ contains a single graph for $k\in\{4,5\}$, we will denote that unique graph by $\Theta_k$.
	   The set $\Theta_{6}$ contains two non-isomorphic graphs, denoted by $\Theta_{6}^{1}$ and $\Theta_{6}^{2}$, as shown in Figure \ref{th6}.
	    Lan et al. \cite{theta} studied upper bounds of $ex_{_\mathcal{P}}(n,\Theta_{4})$ and $ex_{_\mathcal{P}}(n,\Theta_{5})$.
	    Ghosh et al. \cite{theta6} provided  a tight upper bound of $ex_{_\mathcal{P}}(n,\Theta_{6})$, while Guan et al. \cite{theta 612} determined the bounds of $ex_{_\mathcal{P}}(n,\Theta_{6}^{1})$ and $ex_{_\mathcal{P}}(n,\Theta_{6}^{2})$.

	   \begin{theorem}
	   	\leavevmode
	  	\rm
	  	Let $n$ be an integer.

	  	\noindent	$(1)$ \cite{theta}  For all $n\geq 4$, $ex_{_\mathcal{P}}(n,\Theta _{4})\leq \frac{12}{5}(n-2)$, with equality when $n\equiv12\ (\mathrm{mod}\ 20)$.
	  	
	  	\noindent	$(2)$ \cite{theta}  For all $n\geq 5$, $ex_{_\mathcal{P}}(n,\Theta_{5})\leq \frac{5}{2}(n-2)$, with equality when $n\equiv50\ (\mathrm{mod}\ 120)$.
	  	
	  	\noindent	$(3)$ \cite{theta6}  For all $n\geq 14$, $ex_{_\mathcal{P}}(n,\Theta_{6})\leq \frac{18}{7}n-\frac{48}{7}$, with equality when $n\equiv24\ (\mathrm{mod}\ 54)$.
	  	
	  	\noindent	$(4)$ \cite{theta 612} For all $n\geq 6$, $ex_{_\mathcal{P}}(n,\Theta_{6}^{1})\leq \frac{45}{17}(n-2)$, with equality when $n\equiv70\ (\mathrm{mod}\ 170)$.
	  	
	  	\noindent	$(5)$ \cite{theta 612}  For all $n\geq 6$, $ex_{_\mathcal{P}}(n,\Theta_{6}^{2})\leq \frac{18}{7}(n-2)$, and $ex_{_\mathcal{P}}(n,\Theta_{6}^{2})\geq \frac{18}{7}n-\frac{48}{7}$ for infinitely many $n$.
	  \end{theorem}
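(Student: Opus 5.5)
We would prove all five parts with one scheme, the \emph{triangular-block contribution method}, and adjust only the local analysis for each forbidden graph. Take an extremal $\mathcal{H}$-free plane graph $G$ on $n$ vertices, where $\mathcal{H}$ is $\Theta_4$, $\Theta_5$, $\Theta_6$, $\Theta_6^1$ or $\Theta_6^2$. We may assume $G$ is connected. If it has cut vertices, we glue blocks and argue by induction on $n$, using the fact that each claimed bound is superadditive under gluing at a vertex, so the $2$-connected case suffices.

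The decomposition is as follows. Call two triangles of $G$ related if they share an edge, and take the transitive closure. Each class of triangles, together with its edges, forms a \emph{triangular block}. Every edge lying in no triangle is a block $K_2$ by itself. The blocks decompose $E(G)$.

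For a block $B$ we define its contribution
\[
f(B)=\sum_{F\in\mathcal{F}(G)} \frac{|\partial(F)\cap E(B)|}{\deg F},
\]
so that $\sum_B f(B)=|\mathcal{F}(G)|=e(G)-n+2$ by Euler's formula. The core claim for each part is that $e(B)\le c\, f(B)$ for every block $B$. Here $c=12/7$ gives $12/5$, $c=5/3$ gives $5/2$, and $c=45/28$ gives $45/17$. Summing over blocks yields $e\le c(e-n+2)$, that is, $e\le \frac{c}{c-1}(n-2)$.

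The local analysis runs in two steps.

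\emph{Step 1: classify the possible blocks.} Forbidding $\Theta_k$ restricts which blocks can occur.
\begin{itemize}
\item If $G$ is $\Theta_4$-free, no two triangles share an edge, so every block is $K_2$ or $K_3$.
\item If $G$ is $\Theta_5$-free, every block is $K_2$, $K_3$, $K_4$ or a pair of triangles sharing an edge. A fan made of three consecutive triangles already contains $\Theta_5$, so nothing larger occurs.
\item For $\Theta_6$, $\Theta_6^1$ and $\Theta_6^2$ the list is longer: $K_4$, $K_1+P_4$, $K_5^-$-type pieces and small fans. Each is checked by exhibiting a $6$-cycle with the appropriate chord in any larger configuration.
\end{itemize}

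\emph{Step 2: bound the degrees of faces around each block.} A $\Theta_k$-free graph restricts the degrees of faces incident to edges of a block $B$. For example, a $4$-face sharing an edge with a triangle produces a $5$-cycle with a chord, which is $\Theta_5$. Bounds of this kind give lower bounds on $f(B)$ in terms of the number of exterior edges of $B$ that border small faces.

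The case analysis compares $e(B)$ with the minimal possible $f(B)$. The tight cases are triangles surrounded by faces of the smallest allowed degree. These tight cases dictate the constants, and they also suggest the extremal constructions: glue copies of the tight block pattern periodically, which explains the congruence conditions on $n$ such as $n\equiv 12 \pmod{20}$. For each equality statement we then exhibit such a construction and check by Euler's formula that it attains the bound.

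For parts (3) and (5) the bound has the shape $\frac{18}{7}n-\frac{48}{7}$ rather than $c'(n-2)$. Inequality $e(B)\le c f(B)$ alone gives only $\frac{18}{7}(n-2)$, which is exactly the upper bound stated in (5). To reach the sharper constant $\frac{48}{7}$ in (3), we must show that some block or face incurs a fixed deficit that the pure per-block inequality ignores. We would do this by showing that the outer face of a suitably chosen embedding, or a block touching a large face, has slack. The lower bound in (5) is then just a construction. We expect the main obstacle to be Step 1 together with the subsequent contribution check for the $\Theta_6$-type families. There the block types are numerous and large faces can be shared among several blocks, so the case analysis is delicate. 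We would first try to show that any face of degree at least $6$ contributes at most $1/6$ per edge, and that this uniformly settles all blocks except a few small ones, which we then handle by hand.
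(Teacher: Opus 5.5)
The paper does not prove this theorem; it quotes it from the literature. The nearest in-paper arguments are Lemmas~\ref{K4theta61} and \ref{L:K4theta62}, which use the same face-counting scheme you describe. Judged on its own, your sketch has the right general shape, but several concrete steps fail.

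\textbf{The key inequality is reversed.} Since $\sum_B f(B)=e-n+2$, summing your claim $e(B)\le c\,f(B)$ gives $e\le c(e-n+2)$. That is $(c-1)e\ge c(n-2)$, a \emph{lower} bound on $e$. What you need is $f(B)\le e(B)/c$ for every block, which is the form of the paper's $5f(B)-3e(B)\le 0$ and $12f(B)-7e(B)\le 0$. Your version is actually false for any $K_2$ block, where $f(B)\le\frac12$ gives $c\,f(B)<1=e(B)$. Correspondingly, Step~2 must produce \emph{upper} bounds on $f(B)$, via lower bounds on the degrees of faces meeting $B$, and the case analysis must use the largest possible $f(B)$, not the smallest.

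\textbf{Your blocks are built from the wrong objects.} You form blocks from edge-sharing classes of triangles (3-cycles), but the method (Definition~\ref{eB}) uses \emph{faces} of degree $3$. With your definition Step~1 fails.
\begin{itemize}
\item The plane graph $K_2+mK_1$ has only cycles of length $3$ and $4$, so it is $\Theta_5$- and $\Theta_6$-free. Yet its $m$ triangles share one edge and form a single block with $2m+1$ edges, so there is no finite list of block types.
\item Your sample Step~2 fact is also false. A $4$-face $xyzw$ next to a triangle $xyt$ yields $\Theta_5$ only if $t\notin\{z,w\}$.
\item In $K_2+3K_1$, the triangular face $abc_1$ and the $4$-face $ac_1bc_2$ share the edge $ac_1$, and nothing is forbidden.
\item Moreover, the face-block $\{ab,ac_1,bc_1,ac_3,bc_3\}$ there has $f(B)/e(B)=3/5$, exactly the critical ratio for part~(2). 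So these configurations cannot be excluded; they must be analysed, as the paper does for $B_4'$ in Lemma~\ref{4no4}(1),(2).
\end{itemize}

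\textbf{Part~(3) is not proved.} By Euler's formula, $7e\le 18n-48$ is equivalent to $\sum_B\bigl(18f(B)-11e(B)\bigr)\le -12$. The whole content is locating a total deficit of $12$, and ``some block or face has slack'' does not say where it comes from. Any such argument must use $n\ge 14$: three copies of $K_5^-$ glued in a chain at cut vertices form a $\Theta_6$-free plane graph with $n=13$ and $e=27>\frac{186}{7}$.

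\textbf{The reduction to the $2$-connected case is not valid as stated.} The same example shows this. Small $2$-connected pieces exceed the bounds, for instance $K_5^-$, or $K_4$ for part~(2), where $6>\frac52\cdot 2$. Their surplus must be absorbed elsewhere. The paper instead deletes vertices of degree at most $2$ and treats small components separately.

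\textbf{No constructions are given.} The equality and lower-bound assertions in (1)--(5) need explicit extremal constructions satisfying the stated congruences, and the proposal supplies none.
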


	Gy{\H{o}}ri et al. \cite{C 7,K4C6}  studied the planar Turán number of $\{K_{4},C_{i}\}$ for $i\in\{5,6,7\}$. Fang \cite{K4theta5} gave a tight upper bound of $ex_{_{\mathcal{P}}}(n,\{K_{4},\Theta_{5}\})$.

	 \begin{theorem}
		\leavevmode
		\rm
		Let $n$ be an integer.
		
		\noindent	$(1)$ \cite{K4C6}  For all $n\geq 9$, $ex_{_\mathcal{P}}(n,\{K _{4},C_{5}\})\leq \frac{15}{7}(n-2)$, with equality when $n\equiv2\ (\mathrm{mod}\ 7)$.
		
		\noindent	$(2)$  \cite{K4C6} For all $n\geq 6$, $ex_{_\mathcal{P}}(n,\{K _{4},C_{6}\})\leq \frac{7}{3}(n-2)$, with equality when $n\equiv50\ (\mathrm{mod}\ 288)$.
		
			\noindent	$(3)$ \cite{C 7}  For all $n\geq60$, $ex_{_\mathcal{P}}(n,\{K _{4},C_{7}\})\leq \frac{18}{7}n-\frac{48}{7}$, with equality when $n\geq 110$ and $n\equiv26\ (\mathrm{mod}\ 42)$.

		\noindent	$(4)$  \cite{K4theta5}  For all $n\geq 25$, $ex_{_\mathcal{P}}(n,\{K _{4},\Theta_{5}\})\leq \frac{25}{11}(n-2)$, with equality when $n\equiv24\ (\mathrm{mod}\ 88)$.
	\end{theorem}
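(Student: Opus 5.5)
These four items are quoted from the literature (\cite{K4C6}, \cite{C 7}, \cite{K4theta5}), so the statement itself needs only the citations. If I had to reprove them, I would use one uniform template for all four, with the constant $c\in\{\frac{15}{7},\frac{7}{3},\frac{18}{7},\frac{25}{11}\}$ and the forbidden graph $H\in\{C_5,C_6,C_7,\Theta_5\}$ varying.

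\emph{Reductions.} Take an extremal $\{K_4,H\}$-free plane graph $G$ on $n$ vertices. I would argue by induction on $n$ to remove cut vertices and vertices of degree at most $2$. Deleting such a vertex, or splitting at a cut vertex, loses few edges, and $c(n-2)$ (respectively $\frac{18}{7}n-\frac{48}{7}$) absorbs that loss; small $n$ are checked by hand. After this, $G$ is $2$-connected, so every face is bounded by a cycle.

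\emph{Triangular blocks.} Group the triangular faces into \emph{triangular blocks}: the maximal edge-connected unions of triangular faces sharing edges. Because $G$ is $K_4$-free, no vertex is enclosed by a triangle of three facial triangles. Because $G$ is $H$-free, each block is very restricted, since a long chain of triangles already contains $C_5$, $C_6$, $C_7$ or $\Theta_5$. For each forbidden $H$ I would list the finitely many admissible blocks $B$: a single triangle, two triangles sharing an edge, a fan of three triangles, and so on.

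\emph{Discharging.} For each edge $e$ of a block $B$ with $e\in\partial(F)$ for some non-triangular face $F$, charge $e$ a share of $F$. Define
$$\omega(B)=f(B)+\sum_{e\in\partial B}\frac{1}{d(F_e)},$$
where $f(B)$ is the number of triangular faces of $B$, $\partial B$ the edges of $B$ on the boundary of non-triangular faces, and $F_e$ the non-triangular face on the other side of $e$. Every face of $G$ is then counted exactly once. The key lemma is $e(B)\le \lambda\,\omega(B)$ for a constant $\lambda$ determined by $c$. Summing over blocks gives $e\le\lambda f$, and Euler's formula $n-e+f=2$ yields the bound.

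The main obstacle is this block inequality. The short non-triangular faces, of degree $4$, $5$ or $6$, are exactly where $H$-freeness constrains how blocks can sit side by side, and the extremal constants $\frac{7}{3}$ and $\frac{25}{11}$ come from delicate configurations there. I would first bound the degree of a non-triangular face adjacent to a large block using $H$-freeness, then run case analysis over the block list.

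\emph{Constructions.} For the equality cases, I would glue copies of the extremal block configuration periodically, along a $C_{\ell}$-free pattern, which produces the stated residues of $n$.
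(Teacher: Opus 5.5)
You are right that the statement needs only the citations. The paper quotes these four bounds from the literature and proves none of them, so on that point your proposal matches it. The reproof template you add, however, would not work as written, and the first problem is in its central step. Summing $e(B)\le\lambda\,\omega(B)$ over blocks gives $e\le\lambda f$. Substituting $f=e-n+2$ turns this into $(\lambda-1)e\ge\lambda(n-2)$, which is a \emph{lower} bound on $e$; note that $\lambda>1$ is forced, since $f\le\frac23 e$. To bound $e$ from above you need the reverse inequality, an upper bound on face charge per edge: $\omega(B)\le(1-\frac1c)\,e(B)$. This sums to $f\le(1-\frac1c)e$, hence $e\le c(n-2)$. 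For $c=\frac52$ and $c=\frac{12}{5}$ it is exactly the paper's $5f(B)-3e(B)\le0$ and $12f(B)-7e(B)\le0$.

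Even with the direction corrected, a block-by-block inequality checked by case analysis over the block list is not enough. Some admissible blocks carry positive charge. For $c=\frac73$, take a $B_4$ whose four exterior edges all lie on $4$-faces, i.e.\ the configuration $B_4'$, which is $K_{2,4}$ plus one edge and contains neither $K_4$ nor $C_6$. It has $7f(B)-4e(B)=7\cdot3-4\cdot5=1>0$. The paper meets the same obstruction in the $\Theta_6^2$ case, where $12f(B_4)-7e(B_4)=1$, and must merge each $B_4'$ with its neighbouring trivial blocks into clusters before the charge becomes nonpositive. So you need an explicit redistribution step, not just a list of blocks.

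Item (3) has a further problem. Nonpositivity of every $18f(B)-11e(B)$ only yields $e\le\frac{18}{7}(n-2)$. The target $\frac{18}{7}n-\frac{48}{7}=\frac{18}{7}(n-2)-\frac{12}{7}$ requires $\sum_B\bigl(18f(B)-11e(B)\bigr)\le-12$, and no uniform local bound can supply that global deficit. The octahedron shows the bound genuinely depends on $n\ge 60$: it is $\{K_4,C_7\}$-free with $12>\frac{60}{7}$ edges on $6$ vertices, and its single triangular block has charge $18\cdot 8-11\cdot 12=12>0$. Consequently your cut-vertex induction, with ``small $n$ checked by hand,'' cannot simply apply the bound to small pieces; they need separate treatment.
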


	In this paper, we determine the exact value of
	$ex_{_\mathcal{P}}(n,\{K_{4}, \Theta_{6}^{1}\})$ and a tight upper bound of $ex_{_\mathcal{P}}(n,\{K_{4}, \Theta_{6}^{2}\})$.

	  \begin{theorem}\label{k461}
	 	\sl
	 	Let $n$, $p\geq2$ and $0\leq m\leq1$ be integers such that $n=2p+2+m$.
	 	For all  $n \geq 6$,
	  $	ex_{_\mathcal{P}}(n,\{K_{4},\Theta_{6}^{1}\}) = \left\lfloor \frac{5}{2}(n-2) \right\rfloor,$ with extremal graph $P_{2}+(pK_{2}\cup mK_{1})$.
	 \end{theorem}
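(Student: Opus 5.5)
The proof has two parts: a construction giving the lower bound, and a discharging argument on a plane embedding giving the upper bound $e(G)\le \frac52(n-2)$. Since $e(G)$ is an integer, the upper bound yields the floor.

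\textbf{Lower bound.} In the extremal graph the two "apex" vertices $a,b$ must be non-adjacent. Otherwise $a,b,x_i,y_i$ would span a $K_4$. So I read $P_2$ here as two vertices joined to everything, with $ab\notin E$. The graph consists of $a,b$, a perfect matching $x_iy_i$ ($1\le i\le p$), and possibly one extra vertex, with $a,b$ joined to all other vertices. It is planar: draw the pairs as nested diamonds $a x_i y_i b$. Its edge count is $2(n-2)+p=\lfloor\frac52(n-2)\rfloor$.

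It is $K_4$-free because every triangle is of the form $a x_i y_i$ or $b x_i y_i$, and $a,b$ are non-adjacent. For $\Theta_6^1$-freeness I take $\Theta_6^1$ to be the member of $\Theta_6$ whose chord joins antipodal vertices of the $6$-cycle, i.e.\ two $C_4$'s sharing an edge. Matching with Figure~\ref{th6} needs to be checked; the other type, a triangle glued to a $C_5$, does occur in this graph via $a x_1 y_1 b x_2 y_2$ with chord $x_1b$. Every $6$-cycle must pass through both $a$ and $b$ and use exactly two matching edges, so up to symmetry it is $a x_i y_i b x_j y_j$. Its three antipodal pairs are $\{a,b\},\{x_i,x_j\},\{y_i,y_j\}$, and none of them is an edge.

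\textbf{Upper bound.} Take a counterexample $G$ with $n$ minimum. One can assume $G$ is connected, and handle small $n$ and low-degree or cut-vertex situations by induction. By Euler's formula, $e\le\frac52(n-2)$ is equivalent to $|\mathcal F(G)|\le \frac35 e(G)$. Give each face $F$ the charge $\frac{3}{10}\deg(F)-1$. The total charge is $\frac35 e-|\mathcal F|$, so it suffices to make every charge nonnegative after redistribution. Triangles start at $-\frac1{10}$, while a face of degree $d\ge4$ has surplus $\frac{3d}{10}-1\ge \frac{d}{20}$.

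Group triangular faces into \emph{triangular blocks}: maximal sets of triangular faces connected through shared edges. Each non-triangular face of degree $d$ sends $\frac1{20}$ (or more when $d\ge5$) across each of its edges to the block on the other side.

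\textbf{Main step.} The key structural lemma classifies triangular blocks using $K_4$- and $\Theta_6^1$-freeness. For example, a fan $a x_1\cdots x_5$ with four triangles contains the $6$-cycle $a x_1x_2x_3x_4x_5$ with antipodal chord $ax_3$. Likewise, three triangles around an interior vertex force a $K_4$. So blocks should be small: $K_3$, $K_4^-$, a 3-triangle fan, or a few similar configurations. A block with $t$ triangles whose dual is a tree has $t+2$ boundary edges and receives at least $\frac{t+2}{20}$. This covers its deficit $\frac t{10}$ exactly when $t\le2$, with equality for $K_4^-$ surrounded by $4$-faces, which matches the extremal graph.

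\textbf{Main obstacle.} The hard part is blocks with $t\ge3$ triangles, such as 3-fans and books $K_2+3K_1$. For these I would show, again via $\Theta_6^1$-freeness, that some boundary edge cannot lie on a $4$-face. The reason is that a $4$-face $uvwz$ glued to a boundary edge $uv$ of a 3-triangle block tends to produce two $C_4$'s sharing an edge, with the edge lying inside a $6$-cycle. So that face has degree at least $5$ and supplies extra charge, and I would make the rule send larger faces' surplus preferentially to such blocks. Verifying this case analysis, together with the degenerate cases (bridges, faces meeting the same block twice, small $n$), is where I expect most of the work to lie.
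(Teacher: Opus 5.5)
Your lower bound is correct. You are right that the apex pair must be non-adjacent; the paper's proof in fact uses $2K_1+(pK_2\cup mK_1)$. Your $6$-cycle argument for $\Theta_6^1$-freeness is a valid alternative to the paper's observation that any two $4$-cycles share $0$ or $2$ edges.

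Your charging scheme is exactly the paper's accounting. Splitting each face's charge $\frac{3}{10}\deg F-1$ equally over its edges and summing over a triangular block $B$ gives $-\frac15\bigl(5f(B)-3e(B)\bigr)$, so you need the same per-block inequality $5f(B)-3e(B)\le 0$.

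The gap is that the main step is not carried out, and the version you sketch would fail. Your list of blocks misses two that are allowed in $\{K_4,\Theta_6^1\}$-free plane graphs:
\begin{itemize}
\item the wheel $W_4$, i.e.\ four triangular faces around an interior vertex $w$. You noted that three triangles around an interior vertex give $K_4$, but four do not, and $W_4$ has only five vertices, so it contains no $6$-cycle at all;
\item the block $B_6$, a triangle with a triangle glued on each of its three sides.
\end{itemize}
Conversely, the book $K_2+3K_1$ cannot be a triangular block, since its spine borders only two faces. The paper relies on the cited Proposition~\ref{61block}: the only blocks are $K_2$, $K_3$, $K_4^-$, the $3$-fan, $W_4$ and $B_6$. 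You would have to prove or cite this.

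More seriously, your plan of showing that \emph{some} boundary edge of a block with $t\ge3$ triangles avoids $4$-faces is too weak for the two four-triangle blocks.
\begin{itemize}
\item For $W_4$ the dual is a $4$-cycle, not a tree, so there are four triangles but only four boundary edges, and the deficit is $\frac{4}{10}$. With all four surrounding faces of degree $5$ the block receives exactly $\frac{4}{10}$; this is a tight case. If even one were a $4$-face and the others $5$-faces, it would receive only $\frac{7}{20}$. So you need that \emph{no} boundary edge of $W_4$ lies on a $4$-face.
\item For $B_6$ you need at most four of its six boundary edges on $4$-faces. ``Some'' only gives at most five.
\end{itemize}
Having larger faces send surplus ``preferentially'' is not a substitute. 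A $5$-face has exactly $\frac1{10}$ per edge, so anything extra given to one block is taken from another, and no global rule is given.

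The missing idea is Lemma~\ref{4no4}(1). A $4$-face cannot meet the outer $4$-cycle of a $K_4^-$ whose triangles are faces in exactly one edge, since this yields either two $C_4$'s sharing exactly one edge (a $\Theta_6^1$) or a $K_4$. Each boundary edge $v_iv_{i+1}$ of $W_4$ is an outer edge of the copies of $K_4^-$ on $\{v_{i-1},v_i,v_{i+1},w\}$ and on $\{v_i,v_{i+1},v_{i+2},w\}$. When $n\ge6$ and $\delta\ge3$, a $4$-face through $v_iv_{i+1}$ meets one of these in exactly one outer edge, which is impossible. Arguments of the same kind (Lemmas~\ref{4no4} and~\ref{b6f5}) show that no boundary edge of $B_6$, and at most three of the $3$-fan, lie on $4$-faces.

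Finally, your induction must respect that the bound fails at $n=5$: $W_4$ has $8>7.5$ edges. So $5$-vertex pieces need the separate estimate $e\le8$. The paper handles this by deleting vertices of degree at most $2$, each contributing at least $1$ to $5n-2e$, and then bounding $5v-2e$ componentwise. The condition $\delta\ge3$ is also what guarantees that every $4$-face is bounded by a $4$-cycle, which all of the above uses.
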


	 	 \begin{theorem}\label{k462}
	 	\sl
	 	Let $n$ be an integer.
	 	 For all $n \geq 6$, $ex_{_\mathcal{P}}(n,\{K_{4},\Theta_{6}^{2}\})\le \frac{12}{5}(n-2)$, with equality when $n\ge12$ and $n\equiv2\pmod{10}$.
	 \end{theorem}

	 Let $K_{5}^{-}$ be a graph obtained from $K_{5}$ by deleting an edge.
	 	If a graph is $\{K_{4},\Theta_{6}^{1},\Theta_{6}^{2}\}$-free, then it is also $\{K_{4}, \Theta_{6}^{2}\}$-free.
	 Thus
	 $ex_{_\mathcal{P}}(n,\{K_{4},\Theta_{6}^{1},\Theta_{6}^{2}\}) \leq ex_{_\mathcal{P}}(n,\{K_{4}, \Theta_{6}^{2}\}) \leq \frac{12}{5}(n-2)$  for  $n \geq 6$.
	 It is proved in \cite{theta} that  $ex_{_\mathcal{P}}(n,\{K_{5}^{-},\Theta_{6}^{1},\Theta_{6}^{2}\}) \leq \frac{12}{5}(n-2)$ for  $n\geq7$.
	 The extremal $\{K_{4},\Theta_{6}^{2}\}$-free  plane graphs  constructed in the proof of Theorem \ref{k462} are  $\{K_{4},\Theta_{6}^{1},\Theta_{6}^{2}\}$-free and also $\{K_{5}^{-},\Theta_{6}^{1},\Theta_{6}^{2}\}$-free.
	An immediate consequence of Theorem \ref{k462} is the following corollary.

	  \begin{corollary}\label{k46}
	 	\sl Let $n$ be an integer.
	 	
	 	\noindent
	 	$(1)$  For all  $n\geq6$, $ex_{_\mathcal{P}}(n,\{K_{4},\Theta_{6}^{1},\Theta_{6}^{2}\})\leq \frac{12}{5}(n-2)$, with equality when   $n\geq12$ and   $n\equiv2~(\text{mod}~10)$.

	 	\noindent
	 	$(2)$ For all  $n\geq7$, $ex_{_\mathcal{P}}(n,\{K_{5}^{-},\Theta_{6}^{1},\Theta_{6}^{2}\})\leq \frac{12}{5}(n-2)$, with equality when   $n\geq12$ and   $n\equiv2~(\text{mod}~10)$.
	 	
	 \end{corollary}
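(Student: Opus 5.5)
Corollary \ref{k46} is a direct consequence of Theorem \ref{k462}, the result of \cite{theta}, and the explicit extremal graphs built in the proof of Theorem \ref{k462}. So the plan is to separate the upper bounds from the constructions.

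\textbf{Part (1), upper bound.} Every $\{K_{4},\Theta_{6}^{1},\Theta_{6}^{2}\}$-free plane graph is in particular $\{K_{4},\Theta_{6}^{2}\}$-free. Hence, for $n\ge 6$, Theorem \ref{k462} gives $ex_{_\mathcal{P}}(n,\{K_{4},\Theta_{6}^{1},\Theta_{6}^{2}\})\le ex_{_\mathcal{P}}(n,\{K_{4},\Theta_{6}^{2}\})\le \frac{12}{5}(n-2)$.

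\textbf{Part (2), upper bound.} For $n\ge7$ this is precisely the inequality proved in \cite{theta}, so it can be quoted directly.

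\textbf{Equality in both parts.} Fix $n\ge12$ with $n\equiv2\pmod{10}$, and let $G_n$ be the plane graph constructed in the proof of Theorem \ref{k462}. By that construction, $G_n$ has exactly $\frac{12}{5}(n-2)$ edges and is $\{K_4,\Theta_6^2\}$-free. It remains to check two further properties.
\begin{itemize}
\item[(a)] $G_n$ is $\Theta_{6}^{1}$-free. Since $\Theta_6^1$ is a $6$-cycle with one chord, this reduces to a local check on the building blocks of $G_n$ and on how they are glued along shared vertices or edges. Any $6$-cycle with a chord lives inside a small configuration, so one verifies that no such configuration contains a $C_6$ together with a chord of the relevant type.
\item[(b)] $G_n$ is $K_5^-$-free. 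This is immediate: $K_5^-$ contains $K_4$, and $G_n$ is $K_4$-free.
\end{itemize}
Given (a) and (b), the lower bound $\frac{12}{5}(n-2)$ holds in both families, so equality holds in both parts.

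\textbf{Main obstacle.} The only substantive step is (a), since the statement preceding the corollary only asserts this property without proof. I would argue it by examining every $6$-cycle of $G_n$. Each such cycle either stays inside one block or passes through a cut structure between blocks. In the second case, any chord would have to join two different blocks, but no edges join different blocks except through the gluing vertices, so these cycles carry no chord. This leaves the cycles inside a single bounded-size block, which can be checked by hand.
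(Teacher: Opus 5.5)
Your route is the paper's route. The bound in (1) comes from monotonicity plus Theorem~\ref{k462}, the bound in (2) is quoted from the literature, and equality uses the graph $H^{*}$ from the proof of Theorem~\ref{k462}, with $K_{5}^{-}$-freeness inherited from $K_{4}$-freeness. The paper simply asserts that $H^{*}$ is $\Theta_{6}^{1}$-free, so your step (a) is the only one with real content. Your argument for it fails as written.

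In $H^{*}$ the copies of $B_{5,b}$ are edge-disjoint and share only vertices. Hence every edge, and so every potential chord, lies inside a single copy. Your premise that ``any chord would have to join two different blocks'' is therefore backwards. The case you dismiss, a chord inside one copy (for instance between its two attachment vertices, or from an attachment vertex to the hub), is the only case that can actually occur.

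The repair uses the specific structure of $H^{*}$:
\begin{itemize}
\item The copy replacing $xy\in E(H)$ has $x,y$ as opposite rim vertices, so they are nonadjacent. Its three new vertices have all their neighbours inside the copy.
\item No two vertices of $H$ are adjacent in $H^{*}$.
\item A cycle lying inside one copy has at most $5$ vertices, so it is not a $6$-cycle.
\item Now take a cycle not contained in one copy. Since $\{x,y\}$ separates the new vertices of that copy from the rest, the cycle meets the interior of each copy in at most one $x$--$y$ segment, of length at least $2$.
\item Contracting these segments gives a cycle in the simple graph $H$, so it has length at least $3$.
\end{itemize}
Consequently every $6$-cycle of $H^{*}$ has the form $x\,m_{1}\,y\,m_{2}\,z\,m_{3}\,x$, where $xyz$ is a triangle of $H$ and each $m_{i}$ is a new vertex of the corresponding copy.

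Such a cycle has no chord. The vertices $x,y,z$ are pairwise nonadjacent, and $m_{1}$ has no neighbour on the cycle other than $x$ and $y$; the same holds for $m_{2}$ and $m_{3}$. So every $6$-cycle of $H^{*}$ is induced. This gives $\Theta_{6}^{1}$-freeness, and at the same time the $\Theta_{6}^{2}$-freeness that the paper also only asserts.
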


	\section{Preliminaries}
	
		In this section, we introduce some concepts that will be used throughout the paper.

		\begin{definition}[{\cite{theta 612}}]
			\label{eB}
			\rm
			Let $G$ be a plane graph. A \textit{triangular block} $B$ is a subgraph of $G$ constructed recursively as follows:
		 start with an edge $e\in E(G)$. If $e$ is not incident to any  face of degree $3$, then $B$ is induced by $e$;
		otherwise, we add $e$ to $E(B)$ and then repeatedly add all edges of every  face of degree 3 that shares an edge with $B$, until no further edges can be added.
		\end{definition}

	If $E(B)=\{e\}$, then $B$ is called a \textit{trivial triangular block}; otherwise, it is said to be nontrivial.
	An edge of $B$ is defined as \textit{interior edge} if it is incident to two faces of degree 3.
		Let  $\mathcal{B}(G)$ denote the collection of all triangular blocks of $G$.
		It is easy to see that $\mathcal{B}(G)$ forms a decomposition of 	$G$.
			For every triangular block $B$, define its edge contribution as $e(B)$. 	Consequently, the total number of edges of $G$ can be expressed as
	$$
	e(G)=\sum_{B\in \mathcal{B}(G)} e(B).
	$$

		\begin{definition}[{\cite{theta 612}}]
			\rm \label{fG}
		Let $e$ be an edge of a plane graph $G$ and let $F_{1}$ and $F_{2}$ denote the  two faces that are incident to $e$. If $e$ is incident to only one face, we set $F_{1}=F_{2}$.
		For any $F\in \mathcal{F}(G)$ and $e\in \partial(F)$, define $$f_{F}(e)=\frac{1}{|\partial(F)|}.$$
	The face contribution of $e$, denoted by $f(e)$, is defined as
$$
	f(e)=f_{F_{1}}(e)+f_{F_{2}}(e).
$$
	\end{definition}

	\begin{figure}[H]
		\centering
		\includegraphics[width=0.55\linewidth,
		height=0.4\textheight,
		keepaspectratio,
		draft=false]{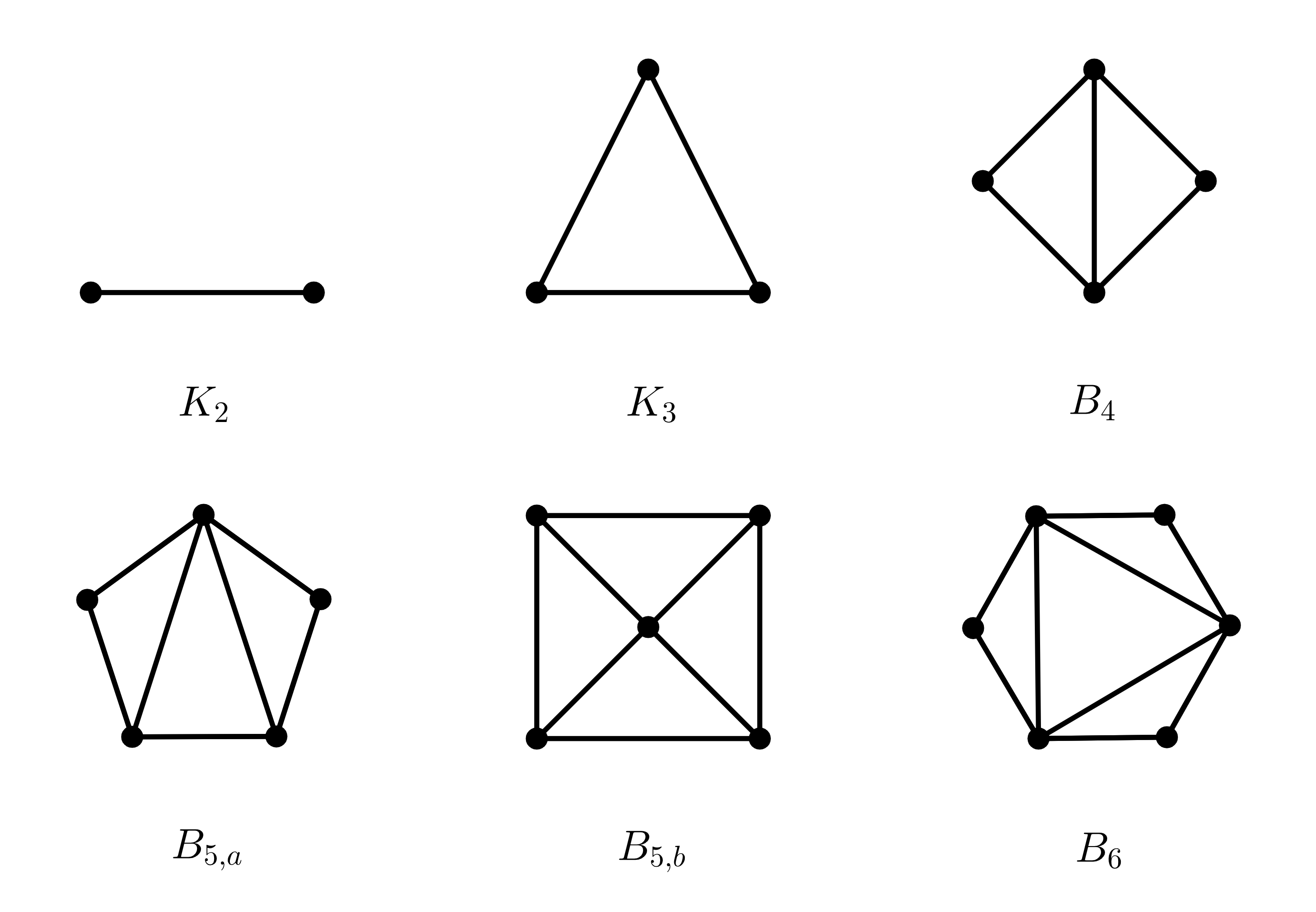}
		\caption {All $\{K_{4},\Theta_{6}^{1}\}$-free triangular blocks.}
		\label{5verticesblock}
	\end{figure} \noindent

	Let $B$ be a triangular block  of $G$. The face contribution of $B$, denoted by $f(B)$, is defined
\[
f(B) = \sum_{e \in B} f(e).
\]

	Let $f(G)$ be the total number of faces of $G$.
	Since $$\sum_{e \in \partial(F)} f_{F}(e) =\sum_{e \in \partial(F)}\frac{1}{|\partial(F)|}=|\partial(F)|\cdot
	\frac{1}{|\partial(F)|}=1,$$
	 we have
$$ \sum_{B \in \mathcal{B}(G)} f(B)=\sum_{B \in \mathcal{B}(G)} \sum_{e \in B} f(e)=\sum_{e \in E(G)} f(e)	=\sum_{e \in E(G)} (f_{F_{1}}(e)+f_{F_{2}}(e))	=\sum_{F \in \mathcal{F}(G)} \sum_{e \in \partial(F)} f_{F}(e)=\sum_{F \in \mathcal{F}(G)} 1=f(G).
$$

\vspace{0.1cm}

	\begin{definition} [{\cite{theta 612}}]
		\rm
		Let $T_1, T_2, \dots, T_k$ be a decomposition of $G$ such that $T_{i}$ consists of some triangular blocks of $\mathcal{B}(G)$ for each $i\in\{1,2, \dots, k\}$.
 Define
		$$
		e(T_{i}) = \sum_{\substack{B \in \mathcal{B}(G) \\ B \subseteq T_i}} e(B) \quad \text{and} \quad f(T_{i}) = \sum_{\substack{B \in \mathcal{B}(G) \\ B \subseteq T_i}} f(B).
		$$
	\end{definition}

 The following equations hold:
 $$\sum_{i=1}^{k}e(T_{i})=e(G)~\text{and}~ \sum_{i=1}^{k}f(T_{i})=\sum_{i=1}^{k}\sum_{\substack{B \in \mathcal{B}(G) \\ B \subseteq T_i}}f(B)=\sum_{B\in\mathcal{B}(G)}f(B)=f(G).$$

	  \vspace{0.1cm}
	 	\begin{proposition}[{\cite{theta 612}}]
	 	\label{61block}
	 	\rm
	 	If $G$ is a $\{K_{4}, \Theta_{6}^{i}\}$-free plane graph for $i\in \{1,2\}$, then $G$ has  at most six types of   triangular blocks  each of which is  as shown  in Figure \ref{5verticesblock}.
	 \end{proposition}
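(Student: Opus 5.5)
The plan is to classify nontrivial triangular blocks by the number of vertices they span and to use the two forbidden configurations to bound that number. A nontrivial triangular block $B$ is built by gluing faces of degree $3$ along shared edges. Let $H_B$ be its "triangle graph": its vertices are the degree-3 faces of $B$, and two of them are adjacent when they share an edge. By construction $H_B$ is connected.

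\textbf{Step 1: a bound on the number of vertices.} I would first show that $B$ has at most $5$ vertices. Suppose $|V(B)|\ge 6$. Walking through $H_B$ from one triangle, each newly attached facial triangle $xyz$ shares an edge $xy$ with the part already built. Either it brings in a new vertex $z$, or it closes up on vertices already present. Take a minimal connected union of facial triangles spanning $6$ vertices; it consists of triangles glued edge to edge. I would then check a few cases:
\begin{itemize}
\item a "fan" (all triangles share a common vertex);
\item a "strip" or path of triangles;
\item mixed configurations of four triangles on $6$ vertices.
\end{itemize}
In each case the union contains a Hamiltonian $6$-cycle together with a chord. The chord's position decides which member of $\Theta_6$ appears: one joining vertices at distance $2$ on the cycle versus one at distance $3$. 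So the case analysis must show that both types of chord can be found, so that either exclusion ($\Theta_6^1$ or $\Theta_6^2$) suffices. A strip of four triangles on vertices $v_1,\dots,v_6$ (with edges $v_iv_{i+1}$, $v_iv_{i+2}$) has boundary cycle $v_1v_2v_4v_6v_5v_3v_1$. Its interior chords $v_2v_3$, $v_3v_4$, $v_4v_5$ give both the distance-$2$ and the distance-$3$ chord types, since $v_3v_4$ is antipodal on that cycle. Where a configuration yields only one type, I would use planarity and $K_4$-freeness to rule it out or to extend it. Note also that $K_4$-freeness forbids three facial triangles pairwise sharing edges around a vertex of degree $3$ inside $B$.

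\textbf{Step 2: listing the blocks.} With at most $5$ vertices, I would list all $K_4$-free plane graphs that are unions of edge-connected facial triangles:
\begin{itemize}
\item one triangle ($K_3$);
\item two triangles sharing an edge ($K_4^-$, on $4$ vertices);
\item on $5$ vertices: the fan of three triangles, and the "book" $K_2+3K_1$ with two or three facial triangles around a common edge;
\item the strip of three triangles ($P_5$-squared-like), and the wheel-free variants.
\end{itemize}
Wheels $W_4$ on $5$ vertices must be examined with care: the wheel with $4$ spokes is $K_4$-free, but it contains a $5$-cycle plus chords, not $6$ vertices, so it survives. Counting these non-isomorphic possibilities, together with the trivial block, gives at most six types, matching Figure \ref{5verticesblock}.

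\textbf{Main obstacle.} The hard part is Step 1: ensuring that every $6$-vertex edge-connected triangle configuration contains both $\Theta_6^1$ and $\Theta_6^2$, and not just one of them. The proposition is claimed for each $i$ separately, so each configuration must produce a chord of each type. I would try first to reduce to the two minimal shapes, the fan of four triangles and the strip of four triangles, because any minimal $6$-vertex configuration built by adding one new vertex per triangle is a tree of triangles on $4$ triangles. Then I would verify both chord types directly on these shapes and on the few branching trees.
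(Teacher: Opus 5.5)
\textbf{There is a genuine gap.} It sits exactly at your ``main obstacle'': the claim in Step 1 that every triangular block has at most $5$ vertices is false for $i=1$, so no case analysis can establish it.

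Your own reduction produces the counterexample. A tree of four facial triangles on six vertices has triangle graph either a path or a star $K_{1,3}$. The path gives the fan or the strip, and there you correctly find both chord types. The star gives a central triangle $xyz$ with ears $a,b,c$ glued on $xy,yz,zx$:
\begin{itemize}
\item The ears have degree $2$, so the unique Hamiltonian cycle is $x\,a\,y\,b\,z\,c\,x$.
\item Its only chords $xy,yz,zx$ all join vertices at distance $2$ on this cycle.
\item Any copy of $\Theta_6^1$ would be spanning, so its $6$-cycle would be this cycle with an antipodal chord. Hence the graph contains $\Theta_6^2$ but no $\Theta_6^1$.
\item It is $K_4$-free (only $x,y,z$ have degree above $2$) and embeds with all four triangles as faces.
\end{itemize}
This is precisely the block $B_6$ of Figure~\ref{5verticesblock}: six vertices, nine edges, four triangular faces, six exterior edges. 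The paper quotes the proposition from Guan et al.\ without proof, but relies on $B_6$ in Lemma~\ref{b6f5} and in Case~6 of Lemma~\ref{K4theta61}.

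Your Step 2 list omits it, and reaches ``six'' only by padding:
\begin{itemize}
\item Three facial triangles on one edge cannot occur, since an edge bounds at most two faces; with only two facial, the block is just $B_4$.
\item The strip of three triangles is the same graph as the fan $K_1+P_4$.
\end{itemize}
So the genuine five-vertex blocks are only $B_{5,a}=K_1+P_4$ and $B_{5,b}=K_1+C_4$.

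\textbf{How to repair it.} Split by $i$.

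For $i=2$, your Step 1 goes through, because the fan, the strip and the star configuration all contain $\Theta_6^2$. This leaves the five types $K_2,K_3,B_4,B_{5,a},B_{5,b}$.

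For $i=1$, you must admit the star configuration as the sixth type and then show it cannot be extended, so that no block has $7$ or more vertices. By symmetry, glue a facial triangle $xaw$ on the exterior edge $xa$; $w=y$ is impossible since $xay$ already bounds the other side of $xa$.
\begin{itemize}
\item If $w$ is a new vertex, the $4$-cycles $w\,a\,y\,x\,w$ and $x\,y\,b\,z\,x$ share exactly the edge $xy$, giving a $\Theta_6^1$.
\item If $w=z$ or $w=b$, you get a $K_4$ on $\{a,x,y,z\}$ or on $\{a,b,x,y\}$.
\item If $w=c$, then $x\,y\,b\,z\,c\,a\,x$ is a $6$-cycle with the antipodal chord $xz$.
\end{itemize}

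Your reduction to ``minimal'' four-triangle trees also needs a real argument. The clean way is to grow the block one facial triangle at a time and stop when the sixth vertex appears. The previous union is then $B_{5,a}$ or $B_{5,b}$. An ear on $B_{5,b}$ always creates a strip. An ear on $B_{5,a}$ creates the fan, the strip, or (on the middle rim edge) the star configuration.
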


	\begin{lemma}\label{4no4}
		\rm
		If $G$ is a $\{K_{4}, \Theta_{6}^{1}\}$-free (or $\{K_{4}, \Theta_{6}^{2}\}$-free) plane graph on $n\geq6$ vertices with $\delta(G)\geq 3$, then
		the following results hold:
		
		\noindent	(1) No face of degree 4 shares exactly one common exterior edge with a subgraph  $B_{4}$.

		\noindent	(2) If every exterior edge of a triangular block $B_{4}$ is incident to a face of degree 4,  then the only possible configuration $B_{4}'$ is as shown in Figure~\ref{theta44}.
		
		\noindent(3)
		There exists at most one  face of degree 4 that is incident to some  exterior edges of  a triangular block $B_{5,a}$ and if it exists, the
		 only possible configuration is as shown in Figure~\ref{theta514}$(c)$.
		
		\noindent(4)
		Every exterior edge of a triangular block $B_{5,b}$ is incident to a face of degree  at least $5$.
	\end{lemma}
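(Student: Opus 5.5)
The plan is to treat each of the four items as a local forbidden-configuration argument. For each one I will exhibit, inside the union of the given triangular block and the adjacent face(s) of degree $4$, a $6$-cycle together with a chord, that is, a copy of $\Theta_{6}^{1}$ or $\Theta_{6}^{2}$. The aim is to find configurations that produce \emph{both} types whenever possible, so that one argument covers the $\{K_4,\Theta_6^1\}$-free and the $\{K_4,\Theta_6^2\}$-free cases simultaneously. Where only one type appears, I will use the other hypothesis, or the $K_4$-freeness, to exclude the remaining case. Throughout I use the shapes of the blocks from Proposition~\ref{61block} and Figure~\ref{5verticesblock}. I will assume $B_4$ is two triangles $uvx$, $uvy$ sharing the interior edge $uv$, with exterior cycle $uxvy$. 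I will also assume $B_{5,a}$ and $B_{5,b}$ are the two $5$-vertex blocks made of three triangles.

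For (1), suppose a face $F$ of degree $4$ has boundary $u x a b$ and meets $B_4$ only in the exterior edge $ux$. The first step is to check that $a,b\notin\{v,y\}$; this is the degenerate-case analysis.
\begin{itemize}
\item If $a=v$, then $xv$ is a second common exterior edge, contradicting ``exactly one''.
\item If $b=y$, then $uy$ is a second common exterior edge, again a contradiction.
\item If $a=y$ or $b=v$, then the face boundary forces an extra adjacency ($xy$, or a repeated vertex). With $uv$ this yields a $K_4$ on $\{u,v,x,y\}$, or it contradicts planarity of the embedding.
\end{itemize}
The hypotheses $\delta(G)\ge 3$ and $n\ge 6$ are used to rule out a face boundary that is not a genuine $4$-cycle. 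Once $a,b$ are new vertices, the cycle $a\,b\,u\,y\,v\,x\,a$ is a $6$-cycle. It carries the chord $uv$ (endpoints at distance $2$ on the cycle) and the chord $ux$ (endpoints at distance $3$). These give $\Theta_6^{2}$ and $\Theta_6^{1}$ respectively, in whichever labelling Figure~\ref{th6} uses, so both hypotheses are contradicted.

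For (2), every exterior edge of $B_4$ lies on a $4$-face. By (1), each such face shares at least two exterior edges of $B_4$. The faces incident to $ux$ and $xv$, or to $ux$ and $uy$, must therefore coincide or be arranged around the degree-$3$ vertices $x$ and $y$. I will enumerate how the four exterior edges can be covered by $4$-faces each sharing $\ge 2$ of them. In each case except the one depicted as $B_4'$ in Figure~\ref{theta44}, I will again find a $6$-cycle through the new face vertices and three of $u,v,x,y$ with a chord. Where a $4$-face would close up $x$ or $y$ to degree $2$, the case is killed by $\delta(G)\ge 3$.

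Items (3) and (4) follow the same template for the $5$-vertex blocks. Let $F$ be a $4$-face with boundary $w\,z\,a\,b$ on an exterior edge $wz$ of the block. The block contains a path of length $3$ from $w$ to $z$ along its exterior (using the other triangles), and this path together with $z\,a\,b\,w$ yields a $6$-cycle. Interior edges of the block then serve as chords. For $B_{5,b}$ I expect that every exterior edge admits such a length-$3$ detour, with a chord available of each type, which gives (4). For $B_{5,a}$ there is presumably one exterior edge pair (around the apex of the fan) where the detour has the wrong length or the chords collapse, and this yields the unique configuration of Figure~\ref{theta514}$(c)$. A second $4$-face elsewhere would produce a Theta graph, giving ``at most one''.

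The main obstacle I expect is the degenerate-case bookkeeping: showing that the new face vertices are genuinely distinct from the block vertices, and handling $4$-faces that share two or more exterior edges. These are exactly where the clean $6$-cycle argument fails and where $K_4$-freeness, $\delta\ge3$ and planarity must be invoked case by case.
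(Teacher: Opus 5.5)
\textbf{Verdict: there is a genuine gap in (3) and (4).} Your part (1) is correct and is the paper's argument. When $a,b\notin\{u,v,x,y\}$, the $6$-cycle $abuyvx$ has the chords $uv$ (giving $\Theta_6^2$) and $ux$ (giving $\Theta_6^1$), and $a=y$ gives $K_4$.

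\textbf{The template in (3) and (4) never applies.} Your template takes a $4$-face $wzab$ with $a,b$ outside the block and closes it with a $3$-path of the block. But every exterior edge of $B_{5,a}$ or $B_{5,b}$ is an exterior edge of a sub-$B_4$ formed by two adjacent triangular faces. So (1) already excludes that situation: every $4$-face that matters shares at least two exterior edges with the block, and therefore contains a block vertex besides $w,z$. The cases you defer as ``degenerate bookkeeping'' are thus the entire content of (3) and (4), and your recipe fails on them as stated. For $B_{5,b}$ (hub $w$, rim $v_1v_2v_3v_4$) and a face $v_iv_{i+1}v_{i+2}x$, the exterior $3$-path $v_iv_{i-1}v_{i+2}v_{i+1}$ passes through $v_{i+2}$, so no $6$-cycle arises. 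You must route through the hub: $v_iv_{i-1}wv_{i+1}v_{i+2}x$ is a $6$-cycle with chords $v_iw$ and $v_iv_{i+1}$. Equivalently, apply (1) to the $B_4$ on $\{v_{i-1},v_i,v_{i+1},w\}$, which meets the face only in $v_iv_{i+1}$. This is exactly the paper's proof of (4).

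\textbf{Your prediction for (3) is backwards.} Write $B_{5,a}$ as the fan with apex $v_1$ and triangles $v_1v_2v_3$, $v_1v_3v_4$, $v_1v_4v_5$.
\begin{itemize}
\item The face around the apex does not survive. For a $4$-face $v_5v_1v_2x$, the $6$-cycle $v_1v_4v_3v_2xv_5$ has chords $v_1v_2$ and $v_1v_3$, giving both $\Theta_6^1$ and $\Theta_6^2$. This is Figure~\ref{theta514}$(a)$.
\item The survivor in Figure~\ref{theta514}$(c)$ is the face $v_2v_3v_4v_5$. It shares the \emph{three} exterior edges away from the apex and is closed by the edge $v_2v_5$. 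It spans only five vertices, so no $6$-cycle argument can single it out; it must come out of an enumeration.
\end{itemize}

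The enumeration runs as follows.
\begin{itemize}
\item Take a face meeting the exterior cycle $v_1v_2v_3v_4v_5$ in exactly two consecutive edges. If it turns at $v_2$ or $v_5$, that vertex has degree $2$, contradicting $\delta(G)\ge 3$. If it turns at $v_1$, $v_3$ or $v_4$, it meets one of the two sub-$B_4$'s (on $\{v_1,v_2,v_3,v_4\}$ or $\{v_1,v_3,v_4,v_5\}$) in a single edge, which (1) excludes.
\item Consider the five faces through three consecutive exterior edges. The faces $v_1v_2v_3v_4$ and $v_3v_4v_5v_1$ would put the interior edge $v_1v_4$ or $v_1v_3$ on a $4$-face. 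The faces $v_4v_5v_1v_2$ and $v_5v_1v_2v_3$ leave $v_5$ or $v_2$ with degree $2$. Only $v_2v_3v_4v_5$ remains.
\item ``At most one'' follows because the remaining exterior edges $v_1v_2$ and $v_1v_5$ could only lie on the excluded apex face.
\end{itemize}

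A minor point on (2): your $x,y$ have degree $2$ in $B_4$, not $3$. The $4$-faces are forced to be $xuyw$ and $xvyw'$ with $w\neq w'$ by $\delta(G)\ge 3$ alone, so no $\Theta$ argument is needed there.
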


	\begin{proof}[\bf Proof]
	\noindent(1) Suppose that $F$ is a face of degree 4 that shares exactly one common exterior edge with a subgraph $B_{4}$.
 Since $\delta(G)\geq3$, the boundary of $F$ is a cycle.
	Since $G$ is a simple plane graph on $n\geq6$ vertices, $F$  shares at most three common vertices with $B_{4}$.
	All possible configurations are shown in Figure \ref{Fig:the44}  where the blue line indicates the common edge of $F$ and $B_{4}$ and the shaded region may contain some vertices.
	If $F$ shares two common
	vertices with $B_{4}$ as shown in Figure \ref{Fig:the44}$(a)$, then $G$ contains $\Theta_{6}^{1}$ and $\Theta_{6}^{2}$ as subgraphs, a contradiction.
	If $F$ shares three common vertices with $B_{4}$, the only possible configuration is as shown in  Figure \ref{Fig:the44}$(b)$.
	Because $F$ shares exactly one exterior edge with $B_{4}$,  $G$ contains $K_{4}$ as a subgraph, a contradiction.
		\vspace{-0.3cm}
			\begin{figure}[H]
			\centering
			\includegraphics[width=0.38\linewidth,
			height=0.4\textheight,
			keepaspectratio,
			draft=false]{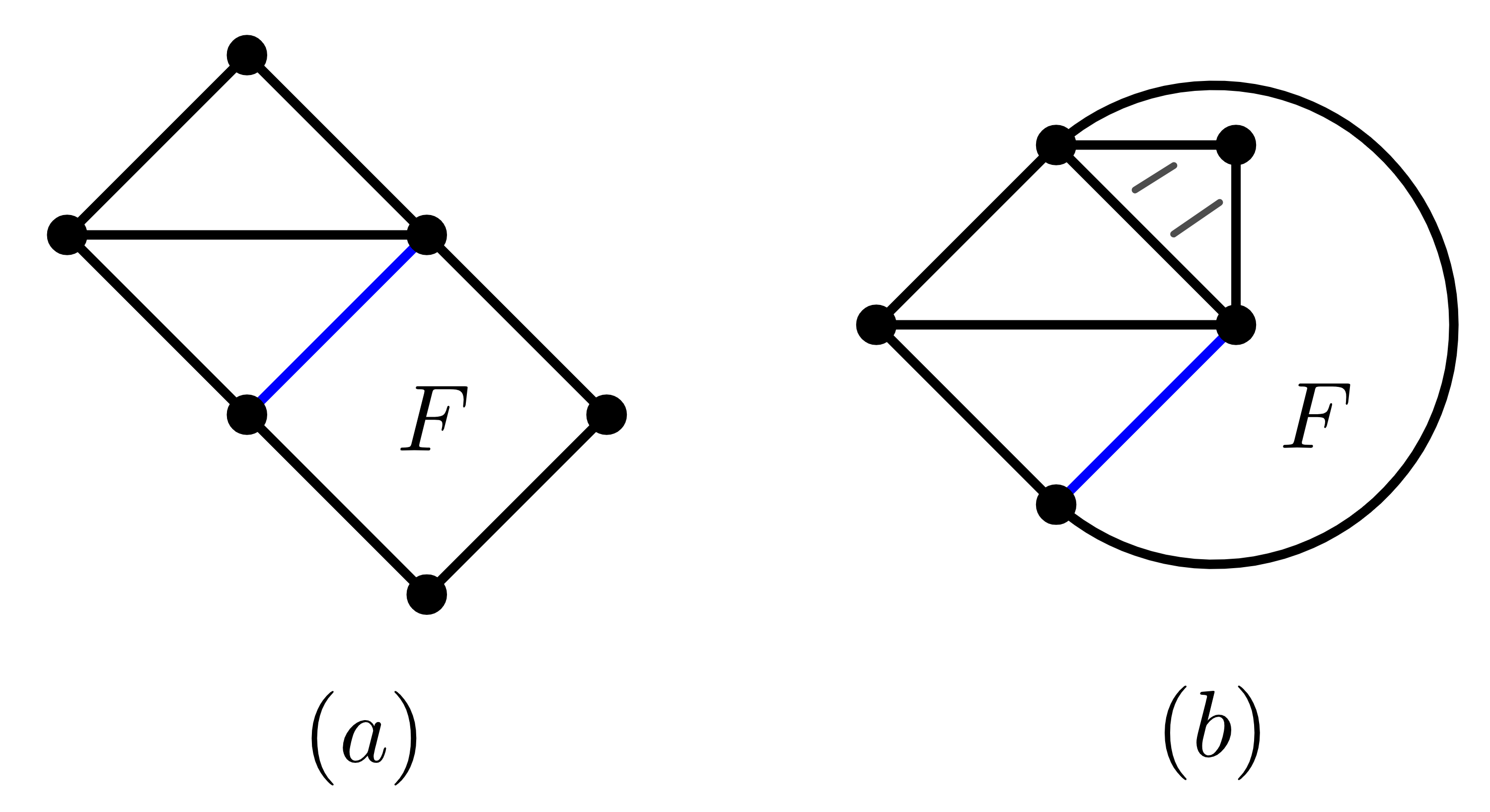}
				\caption{A face $F$ of degree 4 shares one common edge with $B_{4}$.}
			\label{Fig:the44}
		\end{figure}
			\vspace{-0.4cm}
		\noindent(2)
		Let $F$ be a face of degree 4 that is incident to some exterior edges of $B_{4}$.
	Since $n\geq6$,	by the result (1), $F$ shares exactly two exterior edges with $B_{4}$.
		Since $\delta(G)\geq 3$,  the only possible configuration is $B_{4}'$ as shown in Figure~\ref{theta44}.\vspace{-0.4cm}
		\begin{figure}[H]
			\centering
			\includegraphics[width=0.3\linewidth,
			height=0.4\textheight,
			keepaspectratio,
			draft=false]{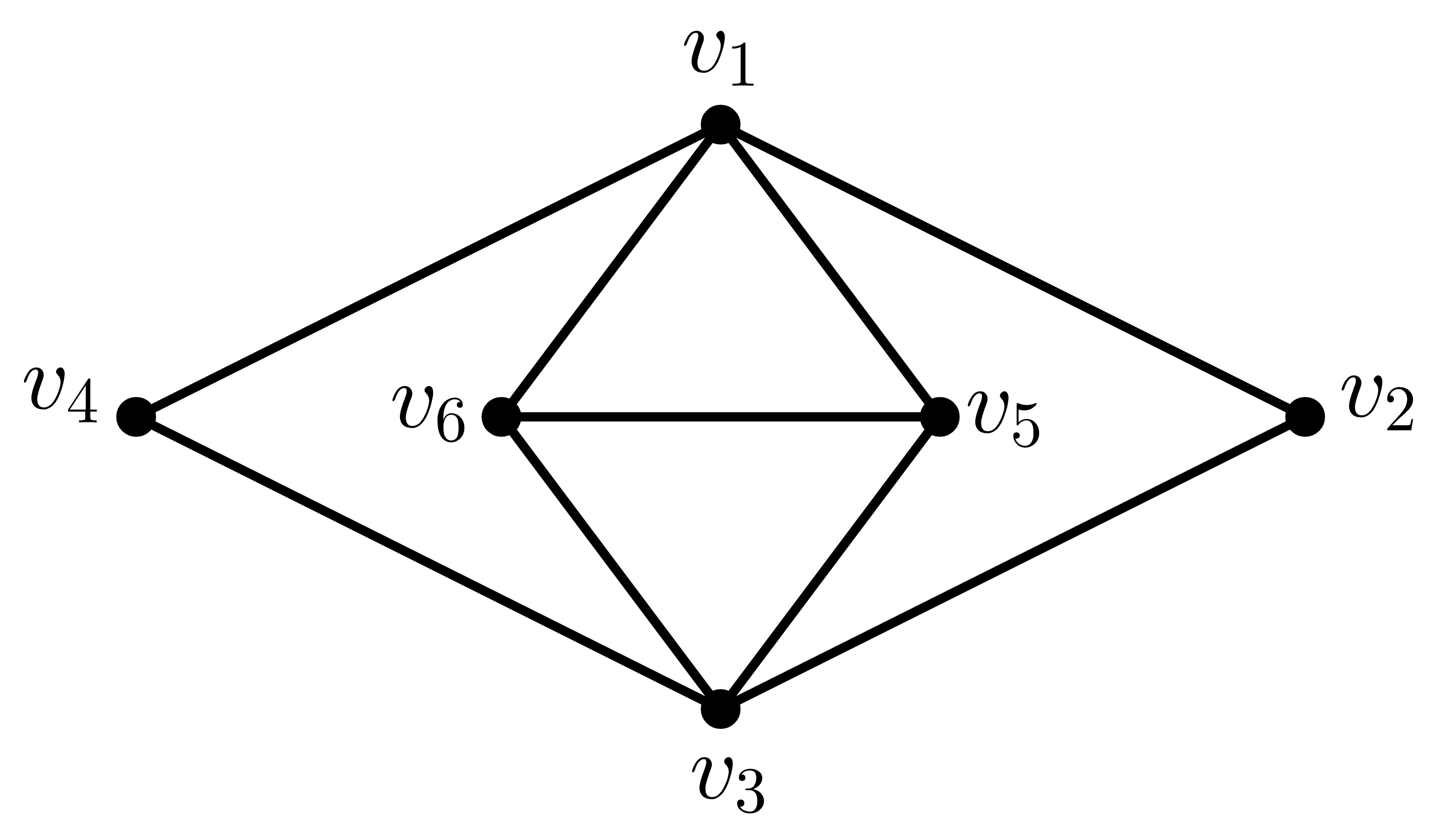}
		\caption{A graph  $B_{4}'$.}
		\label{theta44}
		\end{figure}
		\vspace{-0.4cm}
		\noindent(3)
		Let $B_{5,a}$ be a triangular block with vertex set $\{v_{1},v_{2},v_{3},v_{4},v_{5}\}$ as shown in Figure \ref{theta514}.
		Since $G$ is $\Theta_{6}^{1}$-free or $\Theta_{6}^{2}$-free and every  exterior edge of $B_{5,a}$ is on a subgraph $B_{4}$, we see from the result (1)  that every face of degree 4 shares at least two exterior edges with  $B_{5,a}$.
		Let $F$ be a face of degree 4.
		If $F$ shares exactly two exterior edges with $B_{5,a}$, then all possibilities are  shown in Figure \ref{theta514}$(a)$ and $(b)$.
		In Figure \ref{theta514}$(a)$, $G$ contains a subgraph $\Theta_{6}^{1}$ consisting of two cycles $v_{2}v_{6}v_{5}v_{1}v_{2}$ and $v_{2}v_{3}v_{4}v_{1}v_{2}$, and a subgraph $\Theta_{6}^{2}$ consisting of two cycles $v_{3}v_{4}v_{1}v_{3}$ and $v_{3}v_{1}v_{5}v_{6}v_{2}v_{3}$, a contradiction.
		In Figure \ref{theta514}$(b)$, $G$ contains a subgraph $\Theta_{6}^{1}$ consisting of two cycles $v_{2}v_{1}v_{4}v_{3}v_{2}$ and $v_{3}v_{4}v_{5}v_{6}v_{3}$, and a subgraph $\Theta_{6}^{2}$ consisting of two cycles $v_{1}v_{2}v_{3}v_{1}$ and $v_{1}v_{4}v_{5}v_{6}v_{3}v_{1}$, a contradiction.
		Therefore,  $F$ shares at least three exterior edges with $B_{5,a}$. Since $\delta(G)\geq 3$, the only possible configuration is as shown in  Figure \ref{theta514}(c).
		Hence, there exists at most one face of degree 4 that is incident to some exterior edges of $B_{5,a}$.
		\vspace{-0.28cm}
		\begin{figure}[H]
			\centering
			\includegraphics[width=0.5\linewidth,
			height=0.4\textheight,
			keepaspectratio,
			draft=false]{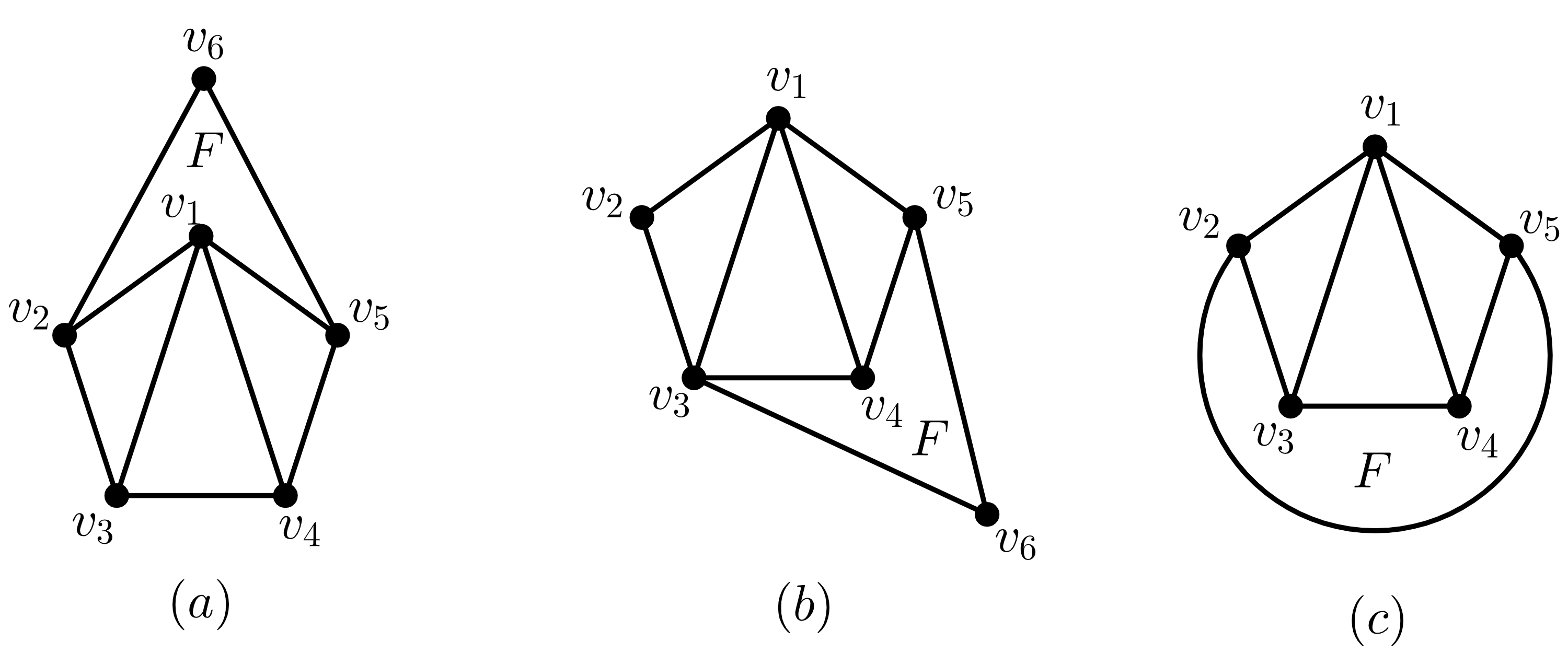}
				\caption{Two or three exterior edges of $B_{5,a}$ are incident to a face of degree 4.}
			\label{theta514}
		\end{figure}
			\vspace{-0.3cm}
		\noindent(4)
		Let $B_{5,b}$ be a triangular block with vertex set $\{v_{1},v_{2},v_{3},v_{4},w\}$  as shown in Figure~\ref{figB5b}.
		Let $e_i = v_i v_{i+1}$ where $i \in \{1,2,3,4\}$ and $i+1$ is taken modulo $4$.
		Then $e_{i}$ is incident to a face with boundary $v_{i}v_{i+1}wv_{i}$.
		Let $F_{i}$ be another  face that is incident to  $e_{i}$.
			According to  the definition of  a triangular block, $F_{i}$ is of degree at least 4.
		If  $F_{i}$ is of  degree 4, then by the result (1) and $n\geq6$, $F_{i}$ shares exactly two common edges with $B_{5,b}$.
		Suppose that $e_{i},e_{i+1}\in E(F_{i})$.
		Note that $e_{i}$ is an exterior edge of a subgraph $B_{4}$ with vertex set  $\{v_i, v_{i+1},w,v_{i+3}\}$.
		Therefore, $F_{i}$ shares exactly one common edge
		$e_{i}$ with this subgraph $B_{4}$, which  contradicts the result (1).
		Hence  every exterior edge of $B_{5,b}$ is incident to a face of degree  at least $5$.
	\end{proof}
	\vspace{-0.3cm}
		\begin{figure}[H]
		\centering
		\includegraphics[width=0.18\linewidth,
		height=0.4\textheight,
		keepaspectratio,
		draft=false]{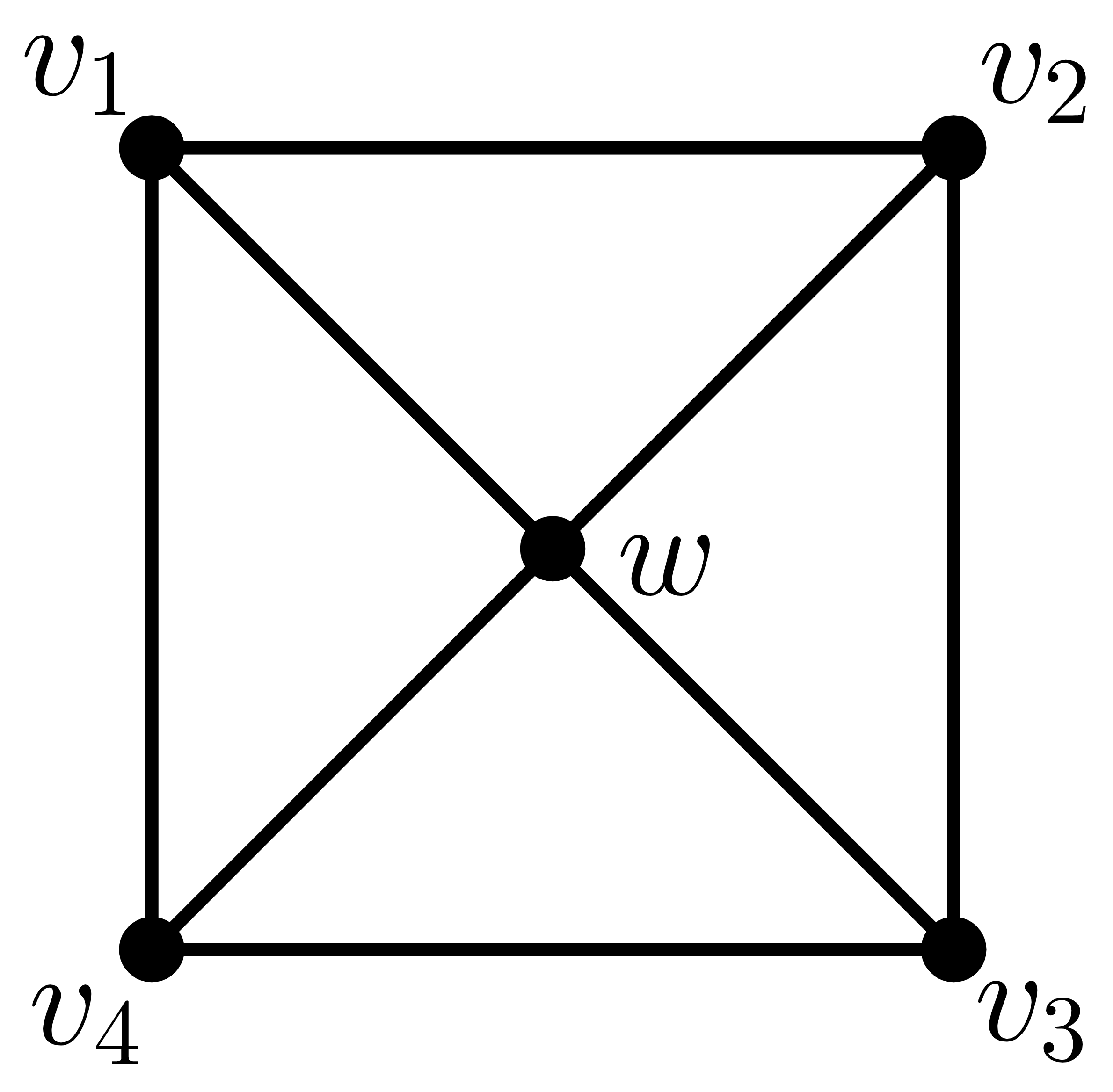}
	\caption{A triangular block $B_{5,b}$}
	\label{figB5b}
	\end{figure}

	\section{Planar Tur\'an number   of  $\{K_{4},\Theta_{6}^{1}\}$}

	In this section, we determine
	$ex_{_\mathcal{P}}(n,\{K_{4},\Theta_{6}^{1}\})= \left\lfloor \frac{5}{2}(n-2) \right\rfloor$ for $n\geq 6$, and
	 construct an infinite family of extremal $\{K_{4},\Theta_{6}^{1}\}$-free plane  graphs.

	\begin{lemma}\label{b6f5}
		\rm
		If $G$ is a $\{K_{4},\Theta_{6}^{1}\}$-free plane graph on $n\geq6$ vertices with $\delta(G)\geq3$, then every exterior edge of a triangular block $B_{6}$ is incident to a  face of degree at least 5.
	\end{lemma}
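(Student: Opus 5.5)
We need to guess what $B_6$ is. Figure \ref{5verticesblock} lists the $\{K_4,\Theta_6^1\}$-free triangular blocks: presumably $B_2$ (a single edge), $B_3$ (a triangle), $B_4$ ($K_4$ minus an edge), $B_{5,a}$ (a fan on 5 vertices), $B_{5,b}$ (the wheel $W_4$), and $B_6$. Since a fan on 6 vertices contains $\Theta_6^1$, the block $B_6$ is most likely the plane graph on six vertices built from a central triangle $v_1v_3v_5$ with a triangle glued on each of its sides, using outer vertices $v_2,v_4,v_6$. Its outer cycle is then $v_1v_2\cdots v_6v_1$, and its exterior edges are the six edges $v_iv_{i+1}$. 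We work under this reading.

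Each exterior edge $e_i=v_iv_{i+1}$ lies on an inner triangle. Its other incident face $F_i$ has degree at least $4$ by the definition of triangular block. Since $\delta(G)\ge3$, the boundary of $F_i$ is a cycle. Suppose $|\partial(F_i)|=4$; we aim for a contradiction.

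First we note the structure: every exterior edge of $B_6$ lies in a subgraph $B_4$. For instance, $v_1v_2$ lies in the $B_4$ on $\{v_1,v_2,v_3,v_5\}$, formed by the triangles $v_1v_2v_3$ and $v_1v_3v_5$, and $v_1v_2$ is an exterior edge of it. So by Lemma~\ref{4no4}(1), $F_i$ cannot share exactly one exterior edge with that $B_4$. Consequently $F_i$ must contain a second edge of this $B_4$ adjacent to $e_i$ along the face boundary. By planarity, the only candidates are $v_2v_3$ (the other outer edge at $v_2$) or edges at $v_1$. The edge $v_1v_5$ is interior to $B_6$, so it is not on $F_i$. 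Hence $F_i$ contains $v_1v_2$ together with $v_2v_3$ or $v_6v_1$, the consecutive outer edge.

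Next we do the case analysis. Say $F_i$ contains $v_1v_2$ and $v_2v_3$. Then $F_i=v_1v_2v_3xv_1$, and $x$ must lie outside $B_6$, since $x\in\{v_4,\dots,v_6\}$ would give a multiple edge or chord and, with $n\ge6$, a forbidden configuration. But then $v_2$ has degree $2$: its only neighbours are $v_1,v_3$, because it sits between the inner triangle and the face $F_i$. This contradicts $\delta(G)\ge3$. Alternatively, we can apply the same $B_4$ argument to the $B_4$ on $\{v_2,v_3,v_1,v_4\}$ … more robustly, $v_1v_2v_3$ together with $x$ yields $K_4$ minus nothing? No: the cycle $v_1xv_3v_4v_5v_1$ with the path through $v_2$ and the chord $v_1v_3$ gives a $C_6$-with-chord. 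Concretely, the 6-cycle $v_1xv_3v_4v_5v_6$? No; the cleaner route is the 6-cycle $x v_3 v_4 v_5 v_6 v_1 x$ with chord $v_3v_5$ or $v_1v_5$, which gives $\Theta_6^1$ when the chord is between vertices at distance $3$ or $2$ (whichever matches $\Theta_6^1$ in Figure~\ref{th6}). The symmetric case $v_6v_1,v_1v_2$ is identical.

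The main obstacle is checking exactly which chord yields $\Theta_6^1$ rather than $\Theta_6^2$. I would first try the degree argument at the middle vertex, which works when the shared consecutive edges meet at a degree-2 outer vertex. That holds here, since outer vertices $v_{2k}$ have only the neighbours $v_{2k\pm1}$ inside $B_6$, so $\delta\ge3$ forces extra edges into $F_i$. If the face instead turns at $v_1$ (an inner-triangle vertex), I would fall back on exhibiting the explicit 6-cycle $x v_3 v_4 v_5 v_6 v_1$ with the chord $v_1v_5$, which is a $\Theta_6^1$ if $\Theta_6^1$ is the 6-cycle with a short chord; otherwise I would use the long chord $v_1 v_3$ in the cycle $v_1 x v_3 v_4 v_5 v_6$.
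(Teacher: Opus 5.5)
Your identification of $B_6$ (central triangle $v_1v_3v_5$, outer vertices $v_2,v_4,v_6$) is correct. The case where the boundary of $F_i$ turns at an outer vertex ($v_1v_2,v_2v_3\in\partial(F_i)$) is correctly killed by $d(v_2)=2$.

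The gap is the other case, $v_6v_1,v_1v_2\in\partial(F_i)$, where the face turns at the central vertex $v_1$. This is not ``symmetric'' to the first case: $v_1$ already has degree $4$ in $B_6$, so no degree contradiction is available. It is precisely the situation the paper has to rule out, in its configurations (a) and (b): the fourth vertex of the face lies outside $B_6$, or it equals the opposite outer vertex.

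Your fallback does not work. Here the boundary of $F_i$ is $v_6v_1v_2xv_6$, so $x$ is adjacent to $v_2$ and $v_6$, not to $v_3$, and the cycle $xv_3v_4v_5v_6v_1$ need not exist. Moreover, in the cycles you list, every proposed chord ($v_3v_5$ and $v_1v_5$ in $xv_3v_4v_5v_6v_1$, and $v_1v_3$ in $v_1xv_3v_4v_5v_6$) joins vertices at distance $2$. Such a chord produces $\Theta_6^2$, not $\Theta_6^1$. As the paper's proofs make explicit, $\Theta_6^1$ is two $4$-cycles sharing exactly one edge, i.e.\ a $6$-cycle with a chord between opposite vertices.

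The repair is already in your first step, which you misapplied. The exterior edges of the $B_4$ on $\{v_1,v_2,v_3,v_5\}$ are $v_1v_2,v_2v_3,v_3v_5,v_5v_1$, and $v_6v_1$ is not one of them. Also, $v_3v_5$, $v_5v_1$ and the diagonal $v_1v_3$ each lie on two triangular faces of $B_6$, so none of them can lie on $F_i$. Hence Lemma~\ref{4no4}(1) forces $v_2v_3\in\partial(F_i)$, and your degree argument at $v_2$ finishes the proof.

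To see this directly, write the boundary of $F_i$ as $v_1v_2abv_1$ with $a\neq v_3$. If $a=v_5$, then $\{v_1,v_2,v_3,v_5\}$ spans a $K_4$. Otherwise $a,b\notin\{v_3,v_5\}$, so the $4$-cycles $v_1v_2v_3v_5v_1$ and $v_1v_2abv_1$ share only the edge $v_1v_2$ and form a $\Theta_6^1$.

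With this correction your argument is shorter than the paper's. The paper uses Lemma~\ref{4no4}(1) only to conclude that the face shares two edges with $B_6$, and then exhibits a $\Theta_6^1$ by hand in each central-vertex configuration. Applying the lemma to the particular $B_4$ formed by the triangle on $e_i$ and the central triangle reduces the whole statement to the degree-$2$ contradiction.
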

	
	\begin{proof}[\bf Proof]
		Let $B_{6}$ be a triangular block with vertex set $\{v_{1},v_{2},v_{3},v_{4},v_{5},v_{6}\}$ as shown in Figure \ref{Fig:b6f5}.
		According to  the definition of  a triangular block, every exterior edge of $B_{6}$ is not incident to a  face of degree 3.
		Suppose that there exists a   face $F$ that is of degree 4 and incident to some exterior edges of $B_{6}$.
		Since $\delta(G)\geq3$, $F$ can share at most two common edges with  $B_{6}$.
		Since every exterior edge of $B_{6}$ is contained in a subgraph that is isomorphic to $B_{4}$, by Lemma \ref{4no4}(1), $F$ shares exactly two  common edges with $B_{6}$.
		Then $F$ shares three or four common vertices with $B_{6}$.
		If $F$ shares three common vertices with $B_{6}$, then since $\delta(G)\geq 3$, the only possible configuration is as shown in Figure~\ref{Fig:b6f5}$(a)$, where $F$ is induced by $\{v_{1},v_{2},v_{3},v_{7}\}$.
		If $F$ shares four common vertices with $B_{6}$, then the only possible configuration is as shown in Figure~\ref{Fig:b6f5}$(b)$, where $F$ is induced by $\{v_{1},v_{3},v_{4},v_{5}\}$ and the shaded region may contain some vertices.
		In 	Figure \ref{Fig:b6f5}$(a)$, $G$ contains a subgraph $\Theta_{6}^{1}$ consisting of two cycles  $v_{1}v_{2}v_{3}v_{7}v_{1}$ and $v_{2}v_{3}v_{4}v_{6}v_{2}$, a contradiction.
		In 	Figure \ref{Fig:b6f5}$(b)$, $G$ contains a subgraph $\Theta_{6}^{1}$ consisting of two cycles  $v_{2}v_{4}v_{5}v_{6}v_{2}$ and $v_{1}v_{3}v_{4}v_{5}v_{1}$, a contradiction.
		Therefore every exterior edge of a triangular block $B_{6}$ is incident to a  face of degree at least 5.
	\end{proof}

		\begin{figure}[H]
		\centering
		\includegraphics[width=0.45\linewidth,
		height=0.4\textheight,
		keepaspectratio,
		draft=false]{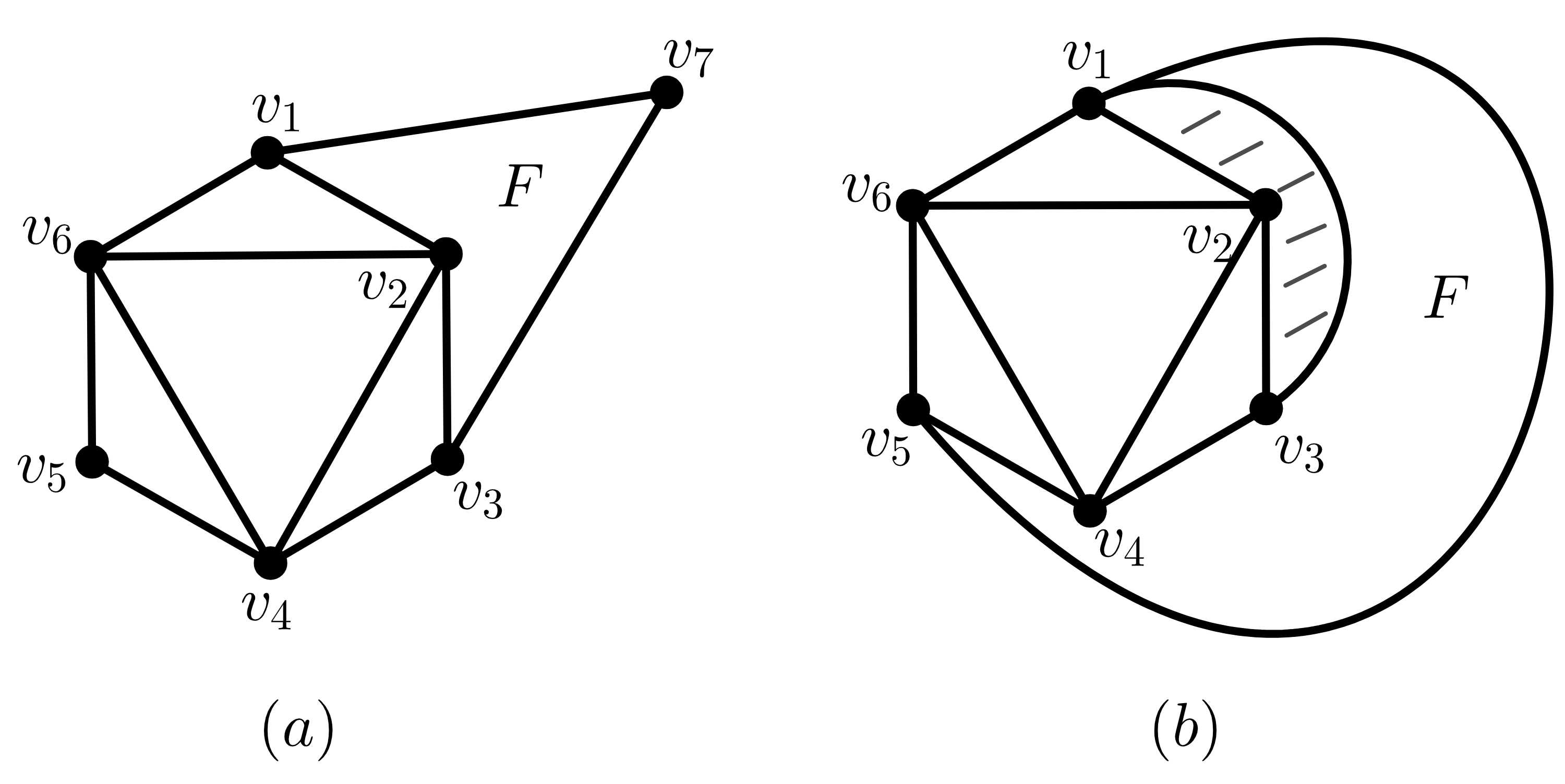}
		\caption {A face of degree 4 shares exactly two common edges with $B_{6}$.}	
		\label{Fig:b6f5}
	\end{figure}

	\begin{lemma}\label{K4theta61}
		\rm
		If $G$ is a $\{K_{4},\Theta_{6}^{1}\}$-free plane graph on $n\geq 6$ vertices  with $\delta(G)\geq 3$, then
		$e(G)\leq \frac{5}{2}(n-2)$.
	\end{lemma}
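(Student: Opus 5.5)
We will prove the bound by discharging over triangular blocks, using Euler's formula $n-e(G)+f(G)\ge 2$ (with equality when $G$ is connected). The target $e(G)\le\frac52(n-2)$ is equivalent to $e(G)\le 5(f(G)-e(G)+n-2)\cdot\frac12+\ldots$; more cleanly, Euler gives $e(G)\le n-2+f(G)$, so it suffices to show $f(G)\le \frac{3}{5}e(G)$. Since $\sum_{B\in\mathcal{B}(G)}e(B)=e(G)$ and $\sum_{B}f(B)=f(G)$, it is enough to show $f(B)\le\frac35 e(B)$ for every triangular block $B$. Substituting, $e\le n-2+\frac35 e$ gives $e\le\frac52(n-2)$.

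By Proposition \ref{61block}, every block is one of the six types in Figure \ref{5verticesblock}: the trivial block $B_2$, the triangle $B_3$, $B_4$ (two triangles sharing an edge), $B_{5,a}$, $B_{5,b}$ (the wheel $W_4$), and $B_6$. For each type we bound $f(B)$ as follows. An interior edge is incident to two triangular faces, so it contributes $\frac23$. An exterior edge lies on one triangle and on a face of degree $\ge4$, so it contributes at most $\frac13+\frac14$. The trivial block's edge lies on two faces of degree $\ge4$, so it contributes at most $\frac12\le\frac35$. Since $\delta(G)\ge3$ and $n\ge6$, face boundaries behave like cycles in the configurations considered in Lemma \ref{4no4}.

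We then go through the types in turn.

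For $B_3$, we have $f\le 3\cdot\frac7{12}=\frac74\le\frac95$.

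For $B_4$ (five edges, one interior), generically $f\le\frac23+4\cdot\frac7{12}=\frac{35}{12}\le 3$. The bound is fine even if all exterior faces have degree 4, so Lemma \ref{4no4}(1),(2) is not even needed here.

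For $B_{5,a}$ (three triangles, seven edges, two interior), $f\le\frac43+5\cdot\frac7{12}=\frac{51}{12}=4.25\le\frac{21}5=4.2$? This fails by a small margin. Here we use Lemma \ref{4no4}(3): at most one face of degree 4 touches its exterior edges, and in the configuration of Figure \ref{theta514}(c) it uses at least three of them. So at most three exterior edges get $\frac14$ and the rest get $\le\frac15$. This gives $f\le\frac43+\frac53+3\cdot\frac14+2\cdot\frac15=3+\frac{23}{20}=4.15\le4.2$.

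For $B_{5,b}$ (eight edges, four interior), Lemma \ref{4no4}(4) gives $f\le4\cdot\frac23+4(\frac13+\frac15)=\frac83+\frac{32}{15}=4.8=\frac35\cdot8$, which is exactly tight.

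For $B_6$ (four triangles, nine edges, three interior, six exterior), Lemma \ref{b6f5} gives $f\le 2+6\cdot\frac{8}{15}=5.2\le 5.4$.

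The main obstacle is verifying the per-block inequalities precisely, notably $B_{5,a}$, where the degree-4 face count from Lemma \ref{4no4}(3) must be used carefully. A cut edge counted twice on one face, or the same face of degree 4 meeting a block on several edges, only lowers contributions, so the bounds above remain valid. Finally, if $G$ is disconnected, Euler's formula gives a smaller $f(G)$, so the inequality only improves. Summing over blocks yields $f(G)\le\frac35e(G)$ and hence $e(G)\le\frac52(n-2)$.
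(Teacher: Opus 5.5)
Your proof is correct and follows the paper's argument essentially verbatim: Euler's formula reduces the bound to the per-block inequality $5f(B)-3e(B)\le 0$ (your $f(B)\le\frac35 e(B)$), and you check the six block types using Lemma~\ref{4no4}(3),(4) and Lemma~\ref{b6f5} exactly where the paper does. There are two cosmetic slips, neither of which breaks the argument. First, for $B_4$ the sum $\frac23+4\cdot\frac7{12}$ equals $3$, not $\frac{35}{12}$, so $B_4$ attains the bound with equality rather than with slack. Second, in the disconnected case $f(G)$ is larger, not smaller, but your inequality $n-e(G)+f(G)\ge 2$ already covers that case.
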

	
	\begin{proof}[\bf Proof]
		By Euler's formula, the inequality  $e(G)\leq \frac{5}{2}(n-2)$ is equivalent to $5f(G)-3e(G)\leq 0$.
		Since $5f(G)-3e(G)=\sum_{B\in \mathcal{B}(G)}(5f(B)-3e(B))$, it suffices to show that $5f(B)-3e(B)\leq 0$ for every  triangular block $B$ of $G$.
		By Proposition \ref{61block}, $G$ has six types of triangular blocks $K_{2}$, $K_{3}$, $B_{4}$, $B_{5,a}$, $B_{5,b}$ and $B_{6}$.

		\vskip3pt\noindent  {\bf Case 1.}
		$B=K_{2}$.
		
		According to  the definition of  a triangular block, this edge is incident to two faces each of which is  of  degree at least 4.
		Then
		$$5f(B)-3e(B)\leq 5\biggl(\frac{1}{4}+\frac{1}{4}\biggl)-3=-\frac{1}{2}<0.$$

		\vskip3pt\noindent  {\bf Case 2.}
		$B=K_{3}$.
		
			According to  the definition of  a triangular block, every edge of $B$ is incident to a face of degree at least 4.
		Hence  $$5f(B)-3e(B)\leq 5\biggl(1+\frac{3}{4}\biggl)-3\times 3=-\frac{1}{4}<0.$$

		\vskip3pt\noindent  {\bf Case 3.}
		$B=B_{4}$.
		
		By Lemma \ref{4no4}(2),	it is possible that every edge of $B$ is incident to a face of degree 4.
		Thus			$$5f(B)-3e(B)\leq 5\biggl(2+\frac{4}{4}\biggl)-3\times 5=0.$$

		\vskip3pt\noindent  {\bf Case 4.}
		$B=B_{5,a}$.

		If every exterior edge of $B$ is incident to a face of degree at least 5, then  $$5f(B)-3e(B)\leq5 \biggl(3+\frac{5}{5}\biggl)-3\times 7=-1<0.$$

		If  $B$ has some exterior edges that are incident to a face of degree 4, then by Lemma \ref{4no4}(3), three exterior edges of $B$ are  incident to a face of degree 4 and the other two exterior edges are incident to a face of degree  at least 5.
		Thus  $$5f(B)-3e(B)\leq5 \biggl(3+\frac{3}{4}+\frac{2}{5}\biggl)-3\times 7=-\frac{1}{4}<0.$$
		
		\vskip3pt\noindent  {\bf Case 5.}
		$B=B_{5,b}$.

		Lemma \ref{4no4}(4) implies that $f(B)\leq 4+\frac{4}{5}$.
		Hence $$5f(B)-3e(B)\leq5 \biggl(4+\frac{4}{5}\biggl)-3\times 8=0.$$
		
		\vskip3pt\noindent  {\bf Case 6.}
		$B=B_{6}$.

		By Lemma \ref{b6f5}, every exterior edges of $B_{6}$ is incident to a face of degree at least 5.
		Then  $$5f(B)-3e(B)\leq5 \biggl(4+\frac{6}{5}\biggl)-3\times 9=-1<0.$$
		
		For every triangular block $B$ of $G$, we have  $5f(B)-3e(B)\leq 0$. Hence  $e(G)\leq \frac{5}{2}(n-2)$, as desired.
	\end{proof}
	\vspace{0.2cm}
	
	\noindent{\bf Proof of Theorem \ref{k461}.}
	Let $G$ be an  extremal $\{K_{4},\Theta_{6}^{1}\}$-free plane graph on $n\geq 6$ vertices.
	We first prove that $e(G)\leq \frac{5}{2}(n-2)$, and so  $e(G)\leq \left\lfloor \frac{5}{2}(n-2) \right\rfloor$.
	The inequality $e(G)\leq \frac{5}{2}(n-2)$ is equivalent to $5n-2e(G)\geq 10$.
	We repeatedly do the operation of deleting vertices of degree at most 2 and
	get an induced subgraph $G'$.  If $G'$ is not an
	empty graph, then $\delta(G')\geq 3$.
	Let $e_{i}$ be the number of edges deleted when deleting the $i$-th vertex where $i=1,2,\ldots,n-v(G').$
	It is clear that  $e_{i}\leq 2$ and $5-2e_{i}\geq 1$.
	Therefore
	\begin{align*}
		5n-2e(G)&=5(v(G')+(n-v(G')))-2(e(G')+e_{1}+e_{2}+\dots+e_{n-v(G')})\\
		&= 5v(G')-2e(G')+\sum_{i=1}^{n-v(G')}(5-2e_{i}) \\
		&\geq 5v(G')-2e(G')+(n-v(G')).
	\end{align*}
	If $n - v(G') = 0$, then $\delta(G) \geq 3$.  By Lemma \ref{K4theta61}, $5n - 2e(G) \geq 10$.
	We next consider the case that $n - v(G') \geq  1$.
	
	If $v(G')=0$, then since  there exists at most one edge  between the last two vertices and $n\geq 6$, we have $$5n-2e(G)\geq 5\times 2-2\times 1+(n-2)\geq12.$$

	If $v(G')\neq 0$, let  $G_{1}', G_{2}', \ldots ,G_{k}'$ be all components of $G'$ where $k\geq 1$.
	Since $\delta(G')\geq 3$ and $G'$ is $K_{4}$-free, we have $v(G_{i}')\geq 5$ for every $i\in\{ 1,2,\ldots,k\}$.
	If $v(G_{i}')=5$, then since
	a plane triangulation on five vertices has  a subgraph isomorphic to $K_{4}$  with nine edges, we have $e(G_{i}')\leq 8$ and so $5v(G_{i}')-2e(G_{i}')\geq 5\times 5-2\times8 =9$.
	If $v(G_{i}')\geq 6$, then by Lemma \ref{K4theta61}, we have $5v(G_{i}')-2e(G_{i}')\geq 10$.
	Therefore  $$5n-2e(G)\geq \sum_{i=1}^{k}(5v(G_{i}')-2e(G_{i}'))+(n-v(G'))\geq 9k+(n-v(G'))\geq 10.$$	
	Therefore $e(G)\leq \frac{5}{2}(n-2)$  and so  $e(G)\leq \left\lfloor \frac{5}{2}(n-2) \right\rfloor$ for all integer $n\geq6$.

We next prove that
$e(G) \geq \left\lfloor \frac{5}{2}(n-2) \right\rfloor$ by constructing a  $\{K_{4},\Theta_{6}^{1}\}$-free plane graph achieving this bound for every integer $n \geq 6$.
	Let $p\geq 2$ and $0\leq m\leq 1$ be integers satisfying $n = 2(p+1) + m$.
	Let $H = 2K_1 + (pK_2 \cup mK_1)$. It is clear that $e(H) = 5p + 2m = \left\lfloor \frac{5}{2}(n-2) \right\rfloor$.
		Since the number of common edges of any two cycles of length 4 in $H$ is either 0 or 2, $H$ is $\Theta_6^1$-free.
Moreover, for every vertex of degree at least 3 in $H$,  the subgraph induced by its neighbors in $H$ is \(C_3\)-free. Hence, $H$ is \(K_4\)-free.
	
		Therefore $e(H) = \left\lfloor \frac{5}{2}(n-2) \right\rfloor$ for  $n\geq6$.
	This completes the proof of Theorem \ref{k461}.
	$\hfill\blacksquare$
	\vskip3pt

		\section{Planar Tur\'an number of   $\{K_{4},\Theta_{6}^{2}\}$}
	
	 In this section, we determine
	 $ex_{_\mathcal{P}}(n,\{K_{4},\Theta_{6}^{2}\})\leq \frac{12}{5}(n-2)$ for  $n\geq 6$ and
	 construct  two types of extremal $\{K_{4},\Theta_{6}^{2}\}$-free plane  graphs.

	 \begin{lemma}\label{L:k4the62s}
	 	\rm
	 	If $G$ is a $\{K_{4},\Theta_{6}^{2}\}$-free plane graph on $n\geq 6$ vertices   with $\delta(G)\geq 3$, then the following results hold:

	 	\noindent(1) A cycle of length 3  cannot share exactly  two vertices that are adjacent on a cycle  of length 5.

	 	\noindent(2) Every  exterior edge of $B_{4}'$ is a trivial triangular block.

	 	\noindent(3) If there exists a face of degree 4 that is incident to some  exterior edges of  a triangular block  $B_{5,a}$,   then each of the remaining exterior edges of $B_{5,a}$ is incident to a face of degree at least 6.
	 	
	 	\noindent(4) Every exterior edge of a triangular block $B_{5,b}$ is incident to a face of degree at least 6.
	 	
	 \end{lemma}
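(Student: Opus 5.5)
Part (1) is the engine for the other three parts, so I would prove it first. Let $C=u_1u_2u_3u_4u_5u_1$ be a $5$-cycle and $T=xyzx$ a triangle whose only common vertices with $C$ are two vertices adjacent on $C$, say $x=u_1$ and $y=u_2$, with $z\notin V(C)$. Then $u_1u_2$ lies on both cycles. Take the $6$-cycle $u_1zu_2u_3u_4u_5u_1$, obtained from $C$ by replacing the edge $u_1u_2$ with the path $u_1zu_2$, and add the chord $u_1u_2$. This chord cuts off a triangle on one side and a $5$-cycle on the other. That is exactly the configuration $\Theta_{6}^{2}$ (a $C_6$ with a chord joining vertices at distance $2$, i.e.\ the two cycles $C_3$ and $C_5$ share one edge). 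This is a contradiction, so (1) holds with no planarity needed; it is a purely combinatorial statement. I will then use (1) repeatedly in the form: \emph{no edge lies on both a triangle and a $5$-cycle that meet in exactly that edge.}

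For (2), take $B_4'$ as in Figure~\ref{theta44}: a $B_4$ together with the $4$-faces on its exterior edges. Let $e$ be an exterior edge of $B_4'$ and suppose $e$ lies on a face of degree $3$, i.e.\ a triangle $T=e\cup\{z\}$. I would argue that, using $\delta(G)\geq 3$, planarity, and $K_4$-freeness, $z$ can be taken outside the relevant vertex set. Then $e$ lies on a $5$-cycle inside $B_4'$: combine one $4$-face with a triangle of $B_4$, or use the two triangles of $B_4$ plus a path through the $4$-face. This $5$-cycle meets $T$ only in the endpoints of $e$, contradicting (1). If instead $z$ is a vertex of $B_4'$, the extra edge creates a $K_4$ or a smaller forbidden configuration, which I would exclude case by case from the figure.

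Parts (3) and (4) follow the same template, using the earlier classifications. For (3), Lemma~\ref{4no4}(3) pins down the configuration of Figure~\ref{theta514}(c). Each remaining exterior edge $e$ already has a face of degree $\ge 4$ on its outer side, since $e$ is exterior to the block. I need to rule out degrees $4$ and $5$ there.
\begin{itemize}
\item Degree $4$ is excluded by Lemma~\ref{4no4}(3), which allows at most one such face.
\item For degree $5$, let $F$ be the $5$-face, a cycle since $\delta(G)\ge 3$. Then $e$ lies on $F$ and on a triangle of $B_{5,a}$. If these meet only at $e$, (1) applies directly. Otherwise $F$ shares further vertices with $B_{5,a}$. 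In that case I would find a different triangle or $5$-cycle through $e$: reroute $F$ through the block, or through the $4$-face of configuration (c), to produce a triangle and a $5$-cycle meeting exactly in $e$.
\end{itemize}
For (4), Lemma~\ref{4no4}(4) already gives degree at least $5$ on each exterior edge $v_iv_{i+1}$. If some face $F_i$ has degree $5$, then $F_i$ and the triangle $v_iv_{i+1}w$ meet in $e_i$. When $F_i$ avoids $w$ and the other $v_j$, this contradicts (1). When $F_i$ passes through $v_{i+2}$ or $v_{i+3}$, I would swap in a $5$-cycle built from $F_i$'s remaining path together with the spokes through $w$, or with the triangle $v_iv_{i+3}w$, so that (1) still applies.

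I expect the main obstacle to be these degenerate subcases in (3) and (4), where the degree-$5$ face reuses vertices of the block. Each such subcase has to be checked against the figures, using planarity and $\delta\ge 3$ to limit which vertices can repeat. Each has to end either in a $K_4$ or in a pair consisting of a triangle and a $5$-cycle that share only one edge.
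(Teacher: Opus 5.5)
Your proof of (1) is correct and is exactly what the paper means by ``obviously'': a triangle and a $5$-cycle that meet precisely in the two ends of a common edge form a $6$-cycle with a chord between vertices at distance $2$, i.e.\ $\Theta_6^2$. Your plan for (2) is also the paper's argument. Its $5$-cycle $C=v_1v_6v_5v_3v_2v_1$ is a $4$-face of $B_4'$ glued along a common edge to a triangle of $B_4$, just as you describe. Your strategy for (3) and (4) is also the right one: pair the block triangle through $e$ with the boundary of the $5$-face and invoke (1). But (3) and (4) are not actually proved. The one thing that needs checking is that the third vertex of that triangle is not on $\partial(F)$, and you never establish it. Instead you aim your case analysis at cases that do not matter, and defer the one that does to an unspecified ``rerouting''.

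The missing observation is that the apex of the triangle is saturated: every angle at it is already occupied by a known face, so it lies on no other face.

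In (4), $B_{5,b}$ is a triangular block, so the four triangles $v_jv_{j+1}w$ are faces of $G$. Hence $w$ has degree $4$, every face at $w$ is one of these triangles, and $w\notin\partial(F_i)$. So the triangle $v_iv_{i+1}w$ meets the $5$-cycle $\partial(F_i)$ in exactly $\{v_i,v_{i+1}\}$, and (1) finishes the proof. Whether $\partial(F_i)$ passes through $v_{i+2}$ or $v_{i+3}$ is irrelevant, since (1) only requires the $5$-cycle to avoid the third vertex of the triangle. Your split ``$F_i$ avoids $w$ and the other $v_j$'' versus ``$F_i$ passes through $v_{i+2}$ or $v_{i+3}$'' therefore handles a harmless case by an unspecified swap. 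It never addresses $w\in\partial(F_i)$, which is the only case that would break (1).

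In (3), the $4$-face of configuration (c) in Lemma~\ref{4no4}(3) is $v_2v_3v_4v_5$. So $v_3$ and $v_4$ have degree $3$, and their three angles are filled by two triangles of $B_{5,a}$ and this $4$-face. Hence a $5$-face through $v_1v_2$ (resp.\ $v_1v_5$) avoids $v_3$ (resp.\ $v_4$), and (1) applies to $v_1v_2v_3$ (resp.\ $v_1v_4v_5$). Your ``otherwise'' branch is therefore vacuous, not something to be handled by rerouting. The degenerate subcases you expect to be the main obstacle do not arise; what the proposal lacks is this one-line saturation argument.
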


	 \begin{proof}[\bf Proof]
	 	
	 	\noindent(1) Since $G$ is $\Theta_{6}^{2}$-free,  the result holds obviously.

	 	\noindent(2)
	 	Let $B_{4}'$ be a graph with vertex set $\{v_{1},v_{2},v_{3},v_{4},v_{5},v_{6}\}$ as shown in Figure \ref{theta44}.
	 	Assume that there exists an exterior edge, say $v_{1}v_{2}$, that is incident to a face $F$ of degree 3.
	 	Note that $v_{1}v_{2}$ lies on a cycle $C=v_{1}v_{6}v_{5}v_{3}v_{2}v_{1}$.
	 	Since $\delta(G)\geq 3$, the third vertex of $F$ is not on the cycle $C$, which contradicts the result $(1)$.

	 	\noindent(3) By Lemma \ref{4no4}(3), the only possible configuration is as shown in Figure \ref{theta514}(c).
	 	We can see that the edges $v_1v_2$ and $v_1v_5$ lie on faces of degree 3 with boundaries $C_1 = v_1v_2v_3v_1$ and $C_2 = v_1v_4v_5v_1$, respectively.
	 	Suppose that  the edge $v_1v_2$  or $v_1v_5$ is incident to a face of degree 5.
	 	Since neither $v_{3}$ nor $v_{4}$ is on the boundary of this face,   $C_{1}$  or $C_{2}$ must share exactly two vertices that are adjacent on the boundary of this face, which contradicts the result (1).

	 	\noindent(4) Let $B_{5,b}$ be a triangular block with vertex set $\{v_1,v_2,v_3,v_4,w\}$ as shown in Figure \ref{figB5b}. By Lemma \ref{4no4}(4), every exterior edge of $B_{5,b}$ is incident to a face of degree at least 5.
	 	In Figure \ref{figB5b}, we see that every exterior edge $e_i = v_i v_{i+1}$ of  $B_{5,b}$ lies on a face $F_i$ of degree 3 with boundary $C(F_i) = v_i v_{i+1} w v_i$, where $i \in \{1,2,3,4\}$ and $i+1$ is taken modulo 4.
	 	If there exists a face $F$ of degree 5 that is incident to an exterior edge $e_{i}$ of $B_{5,b}$, then $F$ shares exactly two vertices  with $F_{i}$ because $w\notin\partial(F)$,  which contradicts the result (1).
	 \end{proof}
	
	 	\begin{lemma}\label{L:K4theta62}
	 	\rm
	 	If $G$ is a $\{K_{4},\Theta_{6}^{2}\}$-free plane graph on $n\geq 6$ vertices  with $\delta(G)\geq 3$, then
	 	$e(G)\leq \frac{12}{5}(n-2)$.
	 \end{lemma}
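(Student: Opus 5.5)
The plan is to follow the discharging scheme of Lemma \ref{K4theta61}. By Euler's formula $n-2=e(G)-f(G)$, so the inequality $e(G)\leq \frac{12}{5}(n-2)$ is equivalent to $12f(G)-7e(G)\leq 0$. Since both $e$ and $f$ are additive over the decomposition $\mathcal{B}(G)$, it suffices to find a decomposition $T_1,\dots,T_k$ of $G$ into groups of triangular blocks with $12f(T_i)-7e(T_i)\leq 0$ for each $i$. By Proposition \ref{61block}, the block types are $K_2$, $K_3$, $B_4$, $B_{5,a}$, $B_{5,b}$ and $B_6$.

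\textbf{The easy blocks.} First I would dispose of $B_6$. Its outer hexagon together with one chord of the inner triangle, say $v_1v_3$, forms a triangle $v_1v_2v_3$ and a $5$-cycle sharing the edge $v_1v_3$. This is a copy of $\Theta_6^2$, so $B_6$ does not occur. For the remaining types I use that every face is a cycle when $\delta(G)\geq 3$, and that non-triangular faces have degree at least $4$.
\begin{itemize}
\item[] $K_2$: $12\cdot\frac12-7=-1$.
\item[] $K_3$: $12(1+\frac34)-21=0$.
\item[] $B_{5,a}$ with all exterior faces of degree $\geq5$: $12(3+1)-49=-1$.
\item[] $B_{5,a}$ with a degree-$4$ face: by Lemma \ref{4no4}(3) and Lemma \ref{L:k4the62s}(3), $12(3+\frac34+\frac26)-49=0$.
\item[] $B_{5,b}$: by Lemma \ref{L:k4the62s}(4), $12(4+\frac46)-56=0$.
\end{itemize}

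\textbf{The block $B_4$.} This is the only type with positive excess. If two or more exterior edges lie on faces of degree $\geq5$ (none on a degree-4 face beyond what is forced), a direct count gives at most $12(2+\frac24+\frac25)-35<0$. Otherwise, by Lemma \ref{4no4}(1), any degree-$4$ face meets $B_4$ in exactly two consecutive exterior edges. There are then two bad subcases.
\begin{itemize}
\item[] One degree-$4$ face, the other two edges on faces of degree $\geq5$: excess at most $12(2+\frac34+\frac15)-35=\frac25$.
\item[] Two degree-$4$ faces, which is $B_4'$ by Lemma \ref{4no4}(2): excess at most $12\cdot3-35=1$.
\end{itemize}
In both subcases each degree-$4$ face $F$ adjacent to $B_4$ carries excess at most $\frac12$.

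\textbf{Charging the excess.} Each such $F$ has two further boundary edges not in $B_4$. I will show these edges are trivial blocks $K_2$. For $B_4'$ this is Lemma \ref{L:k4the62s}(2). In the one-face subcase the same argument applies: each such edge lies on a $5$-cycle through $B_4$, so a triangular face on it would have its apex off that cycle, contradicting Lemma \ref{L:k4the62s}(1). A trivial edge has slack $-1$, i.e. $-\frac12$ for each of its two incident faces. I transfer $\frac12$ from the side of each such $K_2$ facing $F$ to $B_4$; since $F$ has two such edges, $B_4$ receives $1$ per degree-$4$ face, which covers its excess. Each side of a $K_2$ is used by at most one face, hence by at most one $B_4$, so no $K_2$ gives more than its total slack of $1$. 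Grouping each $K_2$ with the $B_4$'s it pays, every group satisfies $12f-7e\leq0$.

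The main obstacle is this charging step: checking that the two non-$B_4$ edges of each adjacent degree-$4$ face really are trivial blocks in every configuration, and that no $K_2$ is charged from both sides beyond its budget. I would verify both by the configuration analysis of Lemma \ref{4no4}(1)(2), using Lemma \ref{L:k4the62s}(1) to rule out a triangle on any such edge.
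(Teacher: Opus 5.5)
Your proof is correct and is essentially the paper's argument: the same block-by-block bounds on $12f(B)-7e(B)$, with the only positive case $B_4'$ paid for by its trivial exterior edges via Lemma~\ref{L:k4the62s}(2). Your rule of sending $\frac12$ across each side of such a $K_2$ is a local form of the paper's cluster count $12f(T_i)-7e(T_i)\le \ell-(a+b)\le-\ell$, and your explicit exclusion of $B_6$ (it contains $\Theta_6^2$) spells out a step the paper leaves implicit. One slip: your ``one degree-$4$ face'' subcase should be $12(2+\frac24+\frac25)-35=-\frac15$, since three exterior edges on degree-$4$ faces cannot occur; it is therefore already covered by your first estimate and needs no charging.
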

	
	 \begin{proof}[\bf Proof]
	
	By Euler's formula, the inequality $e(G) \le \frac{12}{5}(n-2)$ can be rewritten as $12f(G) - 7e(G) \le 0$.
	Let $T_1, T_2, \ldots, T_k$ be a decomposition of  $G$. Since $$12f(G) - 7e(G) = 12 \sum_{i=1}^{k} f(T_i) - 7\sum_{i=1}^{k} e(T_i),$$   it is enough to prove that $$12f(T_i) - 7e(T_i) \leq 0 $$ for each $i \in \{1, 2, \ldots, k\}$.
	We first give an upper bound for $12f(B) - 7e(B)$ for each type of triangular block $B$ that may possibly exist in $\mathcal{B}(G)$.
	
	\vskip3pt\noindent  {\bf Case 1.}
	$B=K_{2}$.
	
	By the definition of a triangular block, this edge is incident to two faces, each of which is of degree at least 4. Then
	$$12f(B)-7e(B)\leq 12\biggl(\frac{1}{4}+\frac{1}{4}\biggl)-7=-1<0.$$

	\vskip3pt\noindent  {\bf Case 2.}
	$B=K_{3}$.
	
	By the definition of a triangular block, every edge of $B$ is incident to a face of degree at least 4.
	So $$12f(B)-7e(B)\leq 12\biggl(1+\frac{3}{4}\biggl)-7\times 3=0.$$
	
	\vskip3pt\noindent  {\bf Case 3.}
	$B=B_{4}$.

	If  a face $F$  of degree 4  is incident to some exterior edges of $B$, then because $\delta(G)\geq3$  and $n\geq6$, $F$ shares at most two common exterior edges with $B$.
	By Lemma \ref{4no4}(1), $F$ shares exactly two common exterior edges with $B$.
	Hence either two or four exterior edges of $B$ are incident to  faces of degree 4.
	
	If exactly four exterior edges  of $B$ are incident to  faces of degree 4, then $$12f(B)-7e(B)= 12 \biggl(2+\frac{4}{4}\biggl)-7\times 5=1.$$
	
	If at most two exterior edges of $B$ are incident to  faces of degree 4, then
	$$12f(B)-7e(B)\leq12 \biggl(2+\frac{2}{4}+\frac{2}{5}\biggl)-7\times 5=-\frac{1}{5}<0.$$

	\vskip3pt\noindent  {\bf Case 4.}
	$B=B_{5,a}$.

	If every exterior edge of $B$ is incident to  faces of degree at least 5, then  $$12f(B)-7e(B)\leq12 \biggl(3+\frac{5}{5}\biggl)-7\times 7=-1<0.$$
	
	If  $B$ has some exterior edges that are incident to  faces of degree 4, then by Lemma \ref{4no4}(3), three exterior edges of $B$ are  incident to a face of degree 4.
	By Lemma \ref{L:k4the62s}(3), the other two exterior edges of $B$ are incident to a face of degree at least 6. Thus  $$12f(B)-7e(B)\leq 12 \biggl(3+\frac{3}{4}+\frac{2}{6}\biggr)-7\times 7=0.$$
	
	\vskip3pt\noindent  {\bf Case 5.}
	$B=B_{5,b}$.

	Lemma \ref{L:k4the62s}(4) implies that $f(B)\leq 4+ \frac{4}{6}$.
	Hence $$12f(B)-7e(B)\leq12 \biggl(4+\frac{4}{6}\biggl)-7\times 8=0.$$

	We next construct a decomposition of $G$ by the triangular blocks of $\mathcal{B}(G)$.
	Note that $12f(B)-7e(B)>0$ if and only if $B=B_4$ and every exterior edge of $B$ is incident to a face of degree 4.
	By Lemma \ref{4no4}(2),  the only possible configuration is $B_{4}'$ as shown in Figure \ref{theta44}.
		We say two $B_{4}'$s are \textit{equivalent} if they share at least one exterior edge.
	Let $\mathcal{B}(G)=\{A_{1},A_{2},\dots,A_{t}\}$.
 If there exists a triangular block $A_i = B_4$ whose all exterior edges are incident to a face of degree 4, then by Lemma \ref{L:k4the62s}(2), we can get a subgraph $B_4'$, denoted by $A_i'$.
	For a subgraph $A_i'$, we can obtain a subgraph $A_i''$ such that $A_i' \subseteq A_i''$ and if there exists another $A_j'$ that is equivalent to a subgraph $A_k' \subseteq A_i''$, then $A_j' \subseteq A_i''$.
	We call such a subgraph $ A_i''$ a cluster.
	Let \(\mathcal{B}_1(G)\) be the collection of all clusters of $G$ and let \(\mathcal{B}_1'(G)\) be the subset of  \(\mathcal{B}(G)\)  that each of its members is contained in some subgraph in \(\mathcal{B}_{1}(G)\).
	Let \(\mathcal{B}_2(G) = \mathcal{B}(G) \setminus \mathcal{B}_1'(G)\). Then \(\mathcal{B}_1(G) \cup \mathcal{B}_2(G)\) is a decomposition of $G$.
	Denote \(\mathcal{B}_1(G) \cup \mathcal{B}_2(G) = \{T_1, T_2, \ldots, T_k\}\).

	For each $i \in \{1, 2, \ldots, k\}$, we shall prove that  $12f(T_i) - 7e(T_i)\leq 0$.
	If $T_i \in \mathcal{B}_2(G)$, by the discussions as above, we have $12f(T_i) - 7e(T_i) \leq 0$.
	If $T_i \in \mathcal{B}_1(G)$, then assume that $T_i$ contains $\ell $ copies of   $B_4'$, where $\ell \ge 1$.
	If $\ell =1$, then $T_i \cong B_4'$. Since \(12f(K_2) - 7e(K_2) \le -1\) and \(12f(B_4) - 7e(B_4) = 1\), we have $$12f(B_4') - 7e(B_4') \le -3 <0.$$
	If $\ell \ge 2$, then we contract every $B_4'$ of $T_i$ to a vertex. If  two $B_4'$s share $m_{js}$ common exterior edges, then we connect $v_j$ and $v_s$ with $m_{js}$ parallel edges. The resulting graph is denoted by $G'$.
	Since every $B_4'$ has four \begin{figure}[H]
		\centering
		\includegraphics[width=0.35\linewidth,
		height=0.4\textheight,
		keepaspectratio,
		draft=false]{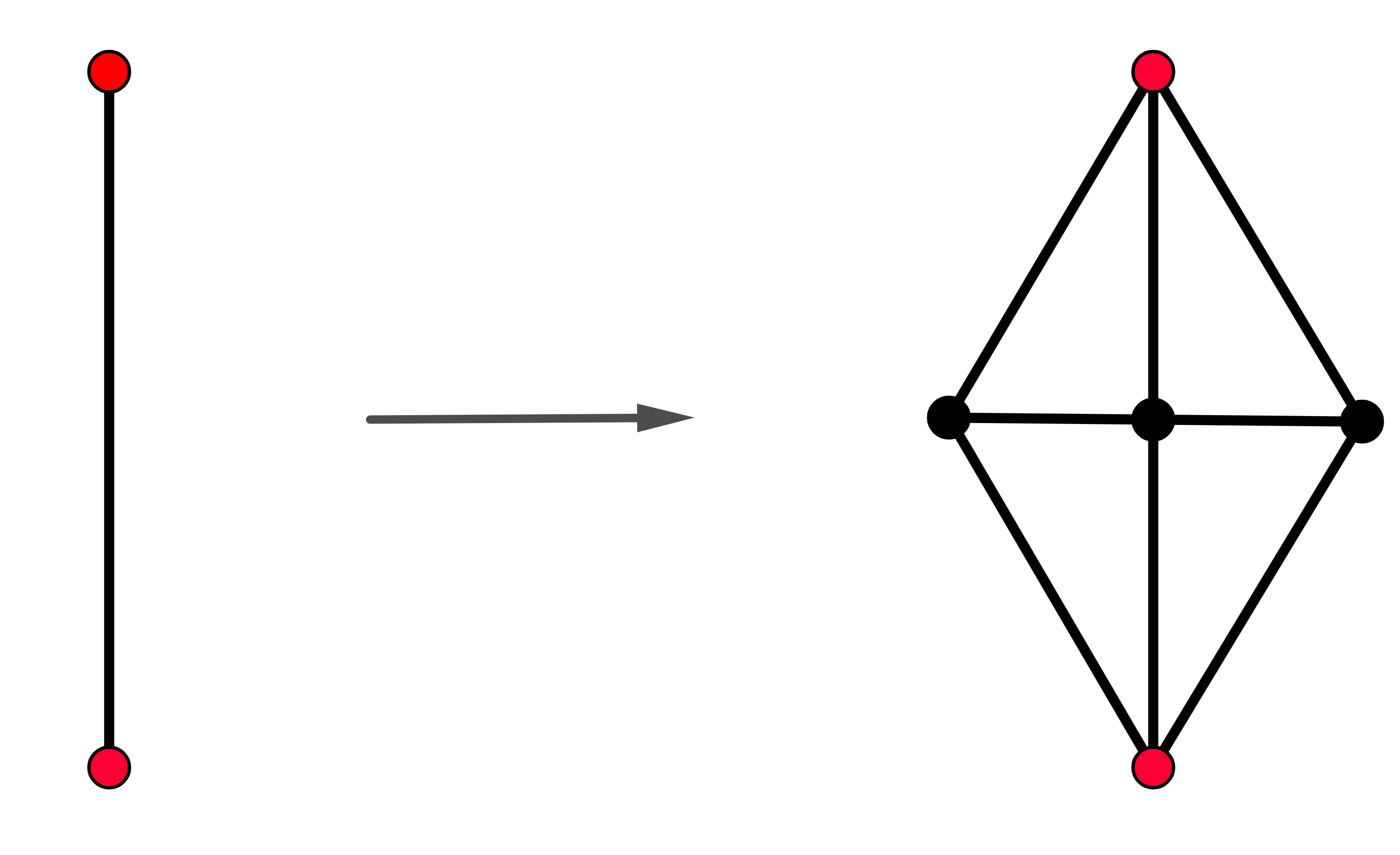}
		\caption{The replacement of an edge with a graph isomorphic to $B_{5,b}$.}	
		\label{b4k2}
	\end{figure}\noindent exterior edges, the maximum degree of $G'$ is at most 4.
	Let $a$ be the total number of edges that lie on the boundaries of  two distinct $B_{4}'$ and let $b$ be the number of the remaining edges.
	Then $2a+b=4\ell$ and
	$$2a=\sum_{v\in V(G')}d_{G'}(v)\leq4\ell.$$
	Hence $a\leq 2\ell$.
	Since \(12f(K_2) - 7e(K_2) \le -1\) and \(12f(B_4) - 7e(B_4) = 1\), we have
	$$12f(T_{i})-7e(T_{i})\leq \ell-(a+b)= -3\ell+a \leq -\ell\leq -1.$$
	
	Therefore, we have $e(G) \le \frac{12}{5}(n-2)$.
	  \end{proof}

	 \vspace{0.2cm}
	
	 	\noindent{\bf Proof of Theorem \ref{k462}.}
	 Let $G$ be an  $\{K_{4},\Theta_{6}^{2}\}$-free plane graph on $n\geq 6$ vertices.
	 The  inequality $e(G) \leq \frac{12}{5}(n-2)$ can be rewritten as $12n - 5e(G) \geq 24$.
	 We repeatedly do the operation of deleting vertices of degree at most 2 and  get an induced subgraph $G'$.  If $G'$ is not an
	 empty graph, then $\delta(G')\geq 3$.
	 Let $e_{i}$ be the number of edges deleted when deleting the $i$-th vertex where $i=1,2,\ldots,n-v(G').$
	 It is clear that  $e_{i}\leq 2$ and $12-5e_{i}\geq 2$.
	 Therefore,  we have
	\begin{align*}
		12n-5e(G) &= 12\bigl(v(G')+(n-v(G'))\bigr)-5\bigl(e(G')+e_{1}+e_{2}+\dots+e_{n-v(G')}\bigr)\\
		&= 12v(G')-5e(G')+\sum_{i=1}^{n-v(G')}(12-5e_{i})\\
		&\geq 12v(G')-5e(G')+2(n-v(G')).
	\end{align*}

	 If $v(G')=0$, then since  there exists at most one edge  between the last two vertices and $n\geq 6$, we have $$12n-5e(G)\geq 12\times 2-5\times 1+2(n-2)\geq 27.$$

	 If $v(G')\neq 0$, let  $G_{1}', G_{2}', \ldots ,G_{k}'$ be all components of $G'$ where $k\geq 1$.
	 Since $\delta(G')\geq 3$ and $G'$ is $K_{4}$-free, we have $v(G_{i}')\geq 5$ for every $i=1,2,\ldots,k$.
	 If $v(G_{i}')=5$, then since
	a plane triangulation on five 	 vertices with nine edges has  a subgraph isomorphic to $K_{4}$, we have $e(G_{i}')\leq 8$. Since $\delta(G')\geq 3$, we have $$3\times5\leq \sum_{v\in V(G_{i}')}d_{G_{i}'}(v)=2e(G_{i}'),$$ which implies $e(G_{i}')\geq \lceil 7.5 \rceil$.
	 Hence, if $v(G_{i}')=5$, then $e(G_{i}')=8$ and $12v(G_{i}')-5e(G_{i}')=20$.
	 If $v(G_{i}')\geq 6$, then by Lemma \ref{L:K4theta62}, we have
	  $12v(G_{i}')-5e(G_{i}')\geq 24$.
	 Therefore, if there  exists a component $G_{i}'$ on at least six vertices, then  $12n-5e(G)\geq 24.$	
	 We next consider the case that all components are  on five vertices.
	 Then  $$12n-5e(G)\geq \sum_{i=1}^{k}(12v(G_{i}')-5e(G_{i}'))+2(n-v(G'))\geq 20k+2(n-v(G')).$$	
	 If $k\geq 2$ or $n\geq v(G')+2$, then $12n-5e(G)\geq 24$.
	 If $k=1$ and $n\leq v(G')+1$, then $v(G')=5$, $n=6$ and $G'$ is isomorphic to $B_{5,b}$.
	 Since $G$ is $\Theta_{6}^{2}$-free, we know that  the last vertex deleted, denoted as $v$, cannot have degree 2 in $G$.
	Then $e(G)\leq e(G')+1=9$ and
	 $$
	 	12n-5e(G)\geq 12\times 6-5\times 9\geq 27.
	 $$
	 In all the cases, we have $12n-5e(G)\geq24$.
	 Hence $ex_{_\mathcal{P}}(n,\{K_{4},\Theta_{6}^{2}\})\leq \frac{12}{5}(n-2)$ for  $n\geq 6$.

		From the proof of Lemma \ref{L:K4theta62}, we see that if $\delta(G)\geq 3$, then the equality $e(G)=\frac{12}{5}(n-2)$ is achieved if  every non-trivial triangular block of $G$  is a  $B_{5,b}$ and every exterior edge of  $B_{5,b}$ is incident to a face of degree 6.
		We first construct a family of extremal  $\{K_{4},\Theta_{6}^{2}\}$-free plane  graphs for $n\geq12$ and $n\equiv2~(\text{mod}~10)$.
		Let $n=10k+2$ where $k\geq1$.
		Let $P_{k}$ denote a path on $k$ vertices.
		Let $H=P_{k}+P_{2}$ for $k\geq1$.
		Then $H$ is a plane  triangulation graph on $k+2$ vertices with $3k$ edges.
		Replace every edge of $H$ by a graph isomorphic to $B_{5,b} $ as is illustrated in Figure \ref{b4k2} and denote the resulting graph by $H^{*}$.      Since $H$ is       a plane  triangulation    and the face set  $F(H^{*})  $ consists of faces of degree 3 (every being an inner face  of $B_{5,b}$) and 6.
	Since any two triangular  blocks $B_{5,b} $ are edge-disjoint,   $H^{*}$ is      $\{K_{4},\Theta_{6}^{2}\}$-free.
		From the construction of $H^{*}$, we have $e(H^{*})  =8e(H) =24k$ and $v(H^{*})  =v(H)+3e(H) =10k+2$.
	Therefore $e(H^{*})=\frac{12(v(H^{*})-2)}{5}$, that is, $H^{*}$ is an extremal $\{K_{4},\Theta_{6}^{2}\}$-free graph on $n \geq 12$ vertices   and $n \equiv 2 \pmod{10}$.

		We now construct a different type of  extremal $\{K_{4},\Theta_{6}^{2}\}$-free plane  graphs for $n\equiv42\ (\mathrm{mod}\ 100)$.
	Let $G_0'$ be the graph obtained from the graph as shown in Figure \ref{figk4t6}(a) by deleting  the dashed edges. Then $G_0'$ has $40$ vertices and $80$ edges.
		For $k \ge 1$, we construct $G_k'$ by embedding $G'_{k-1}$ into the blue central such that the  exterior edges of $G'_{k-1}$ coincide with the blue edges as shown in Figure \ref{figk4t6}$(b)$. A graph
$G_{k}$ is constructed from $G_{k}'$ as follows: (1) add four edges to partition every face of degree 20 into four pentagonal faces and one quadrilateral face; (2) add one new vertex and connect it by four edges to the four vertices of the newly formed quadrilateral face, as indicated by the dashed edges in Figure \ref{figk4t6}$(a)$.
		 We can see that all cycles of length four are contained in the subgraph that is isomorphic to $B_{5,b}$ and no two cycles of length four share exactly one common edge.
		 Moreover, for every edge of a $K_3$, the other face incident to that edge is  of degree 3 or 6.
		Then $G_{k}$ is an extremal $\{K_{4},\Theta_{6}^{2}\}$-free plane graph on $100k+42$ vertices with $240k+96$ edges for $k\geq0$. 	
		This completes the proof of Theorem \ref{k462}.
		$\hfill\blacksquare$

	\begin{figure}[H]
		\centering
		\begin{minipage}{0.39\linewidth}
			\centering
			\includegraphics[width=\linewidth, keepaspectratio]{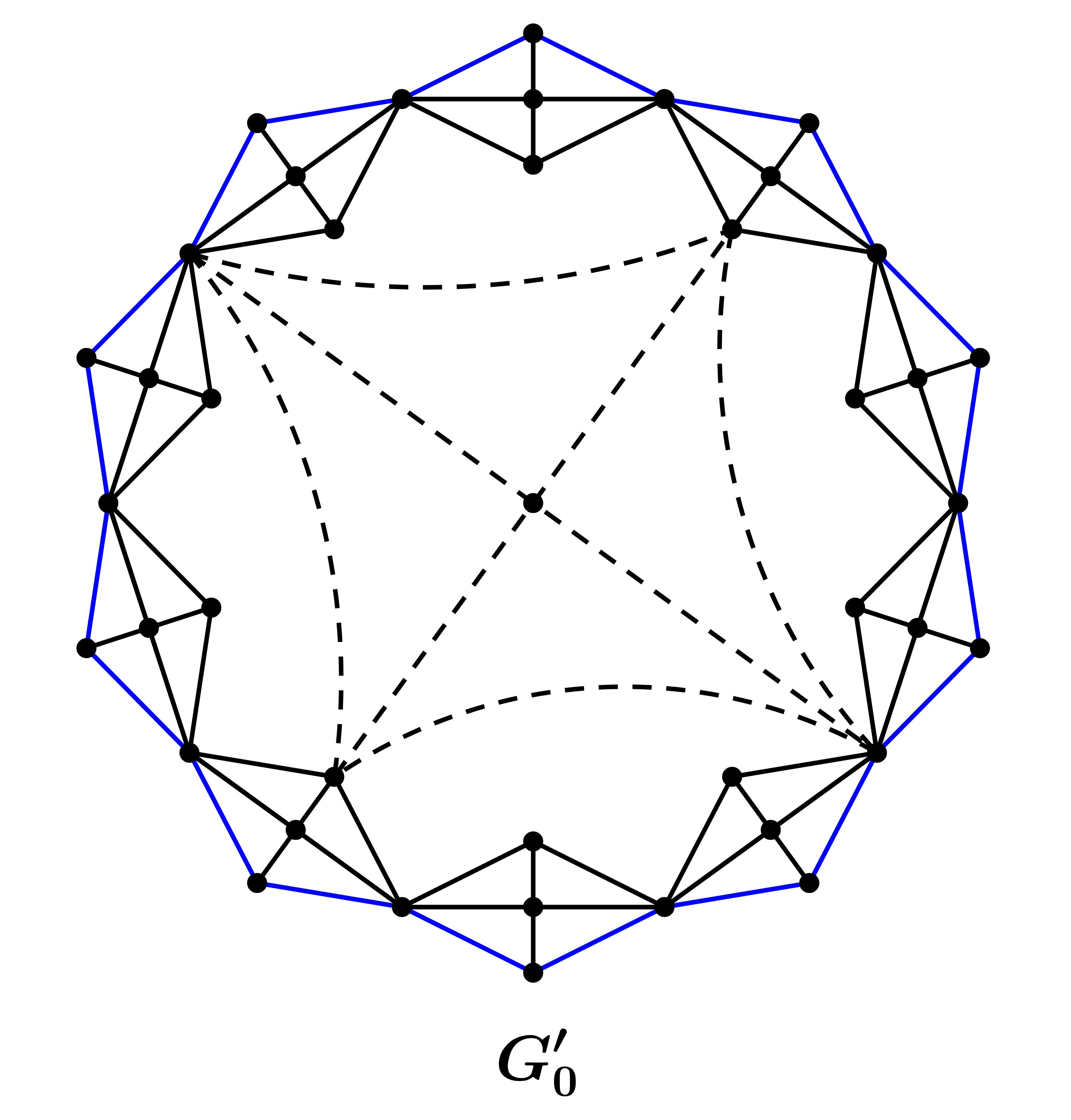}
			\large $(a)$
		\end{minipage}
		\hspace{0.05\linewidth}
		\begin{minipage}{0.36\linewidth}
			\centering
			\includegraphics[width=\linewidth, keepaspectratio]{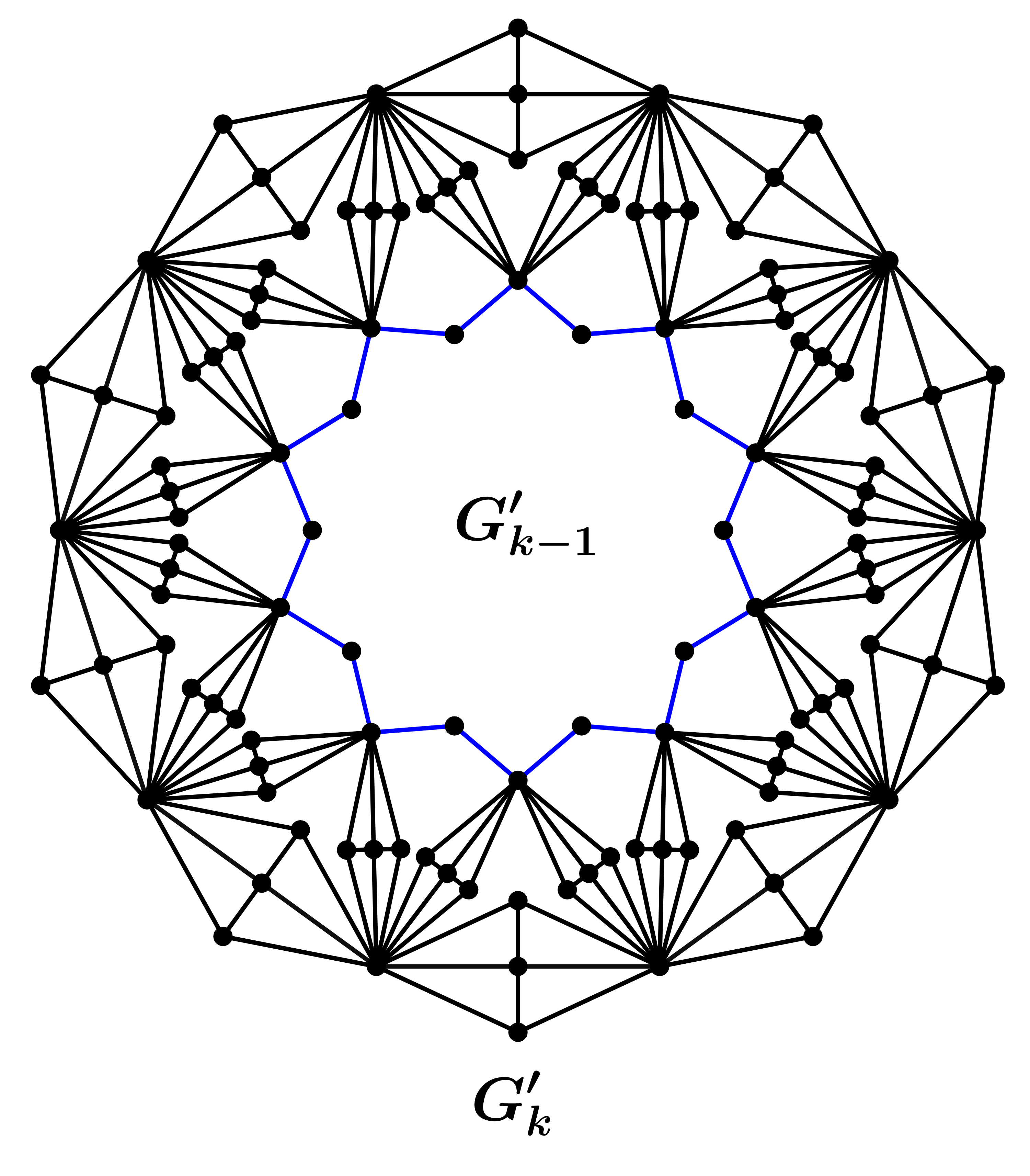}
			\large $(b)$
		\end{minipage}
		\caption{Extremal $\{K_{4},\Theta_{6}^{2}\}$-free plane graphs.}
		\label{figk4t6}
	\end{figure}

	\vskip3pt
	
	\noindent {\bf\large Conflicts of  interest}
	
	The authors declare that they have no conflict interest.

	\vskip10pt
\noindent{\bf\large Data availability}

No data was used for the research described in the article.

	\vskip10pt
	\noindent {\bf\large Acknowledgements}
	
	This work was supported by the National Natural Science Foundation of China (No.
	12271311) and the Natural Science Foundation of Hebei Province, China (No.
	A2025202034).

	\end{document}